\renewcommand{\le}{\leqslant}
\renewcommand{\ge}{\geqslant}
\newcommand{\bad}{\mathbf{Bad}}
\newcommand{\elc}{\mathbf{Elc}}
\newcommand{\ord}{\mathrm{ord}}
\newcommand{\ZZ}{\mathbb{Z}}
\newcommand{\QQ}{\mathbb{Q}}
\newcommand{\NN}{\mathbb{N}}
\newcommand{\FF}{\mathbb{F}}
\newcommand{\vi}{\mathbf{i}}
\newtheorem{lemma}{Lemma}
\newtheorem{theorem}{Theorem}
\newtheorem{proposition}{Proposition}
\newtheorem{definition}{Definition}
\newtheorem{problem}{Problem}
\newcommand{\bigk}{\mathop{\mathbf{K}}}
\author{Dzmitry Badziahin}
\address{
School of Mathematics and Statistics\\
University of Sydney\\
NSW 2006, Australia} \email{dzmitry.badziahin@sydney.edu.au}
\keywords{Mahler functions, Mahler Numbers, t-adic Littlewood
conjecture, Hankel determinant} \subjclass{Primary 11J61, Secondary
05A15, 11B85}
\begin{document}

\title[On $t$-adic Littlewood conjecture for generalised T.-M. functions]{On $t$-adic Littlewood conjecture for generalised Thue-Morse functions}
\begin{abstract}
We consider a Laurent series defined by infinite products $g_u(t) =
\prod_{n=0}^\infty (1 + ut^{-2^n})$, where $u\in \FF$ is a parameter and
$\FF$ is a field. We show that for all $u\in\QQ\setminus\{-1,0,1\}$ the
series $g_u(t)$ does not satisfy the $t$-adic Littlewood conjecture. On the
other hand, if $\FF$ is finite then $g_u(t)\in \FF((t^{-1}))$ is either a
rational function or it satisfies the $t$-adic Littlewod conjecture.
\end{abstract}
\maketitle

\section{Introduction}

The classical $p$-adic Littlewood conjecture was first introduced by
de Mathan and Teulie~\cite{mat_teu_2004} in 2004. It states that for
any prime number $p$ and any real number $\alpha$ one has
$$
\liminf_{q\to\infty} q|q|_p\cdot ||q\alpha|| = 0.
$$
One can easily check that this equation is equivalent to
$$
\inf_{q\in \NN, k\ge 0} q\cdot ||qp^k\alpha||=0.
$$
In other words, this conjecture suggests that the numbers $p^k\alpha$ can not
be uniformly badly approximable for all $k\in \NN$, i.e. the set of partial
quotients of numbers $p^k\alpha$ is unbounded. For more details and a current
progress on this open problem we refer the reader to an
overview~\cite{bugeaud_2014}.

The $p$-adic Littlewood conjecture has an analogous formulation in function
fields. Let $\FF$ be a field and $\alpha\in \FF((t^{-1}))$ be the field of
formal Laurent series over $\FF$. We define an absolute value of $\alpha$ as
follows. For
\begin{equation}\label{def_alpha}
\alpha = \sum_{k=-h}^\infty a_k t^{-k};\quad a_{-h} \neq 0
\end{equation}
we set $|\alpha| := 2^h$. Instead of the absolute value we will sometimes use
a valuation of $\alpha$: $\nu(\alpha):= h$. The distance to the nearest
polynomial from $\alpha$ is defined as
$$
||\alpha||:= \min_{p\in \FF[t]} |\alpha - p| = \left|\sum_{k=1}^\infty a_k t^{-k}\right|.
$$

\begin{problem}[$t$-adic Littlewood conjecture]
Given a field $\FF$, is it true that for any $\alpha\in \FF((t^{-1}))$,
$$
\inf_{q\in \FF[t]\setminus\{0\}, k\ge 0} |q|\cdot ||qt^k\alpha|| = 0?
$$
\end{problem}
Note that in the $t$-adic Littlewood conjecture ($t$-LC) the factor $t^k$
plays the role of $p^k$ from its $p$-adic counterpart. In more general
formulation of the conjecture, any polynomial $p(t)\in \FF[t]$ can be placed
instead of $t$, however in this article we restrict ourselves to narrower
Paroblem~A.

If the ground field $\FF$ is infinite, de Mathan and
\'Teulie~\cite{mat_teu_2004} proved that $t$-LC fails for some series
$\alpha$. Later, Bugeaud and de Mathan~\cite{bug_mat_2008} provided explicit
counterexamples. The case of the finite field $\FF$ appears to be much
harder, and until recently it was not clear if the $t$-adic Littlewood
conjecture holds or fails in that case. However Adiceam, Nesharim and
Lunnon~\cite{ad_ne_lu_2019} managed to construct an explicit counterexample
to $t$-LC for fields $\FF$ of characteristic 3. While the problem is still
open for finite fields of other characteristics, it is now generally believed
to be false for all fields $\FF$.

Given a ground field $\FF$, we denote by $\elc=\elc(\FF)$ the set of
exceptions to $t$-LC, namely
\begin{equation}\label{def_elc}
\elc:= \{\alpha\in \FF((t^{-1}))\;:\; \inf_{q\in
\FF[t]\setminus\{0\}, k\ge 0} |q|\cdot ||qt^k \alpha|| >0\}.
\end{equation}
For any $\delta>0$, let $\elc_\delta$ be a subset of $\elc$ where the
condition in~\eqref{def_elc} is replaced by
$$
\inf_{q\in \FF[t]\setminus\{0\}, k\ge 0} |q|\cdot ||qt^k \alpha|| \ge \delta.
$$
Note that $\elc$ (respectively $\elc_\delta$) belongs to the bigger set
$\bad$ (respectively $\bad_\delta$) of badly approximable series, where
$$
\bad:=\{\alpha\in \FF((t^{-1}))\;:\; \liminf_{q\in \FF[t]} |q|\cdot ||q \alpha|| >0\}\quad\mbox{and}
$$$$
\bad_\delta:=\{\alpha\in \FF((t^{-1}))\;:\; \liminf_{q\in \FF[t]} |q|\cdot
||q \alpha|| \ge \delta\}.
$$

Let's consider the case $\FF = \QQ$. We know that there are plenty
of exceptions to the $t$-LC in this case. For example, it is not
hard to construct such an exception by requiring that numerators (or
denominators) of the coefficients of $\alpha$ grow fast enough.
However it is usually challenging to verify $t$-LC for a given
series $\alpha$ which passes the growth conditions. The author is
aware of only two such results. Bugeaud and de
Mathan~\cite{bug_mat_2008} verified the conjecture for $\alpha =
e^{1/t}$. Adiceam, Nesharim and Lunnon gave a computer assisted
proof that a generating function of the Paper-Folding sequence lies
in $\elc_{2^{-4}}(\FF_3)$. The last result straightforwardly implies
that the same series defined over $\QQ$ also fails $t$-LC. On the
other hand, a number of results has been produced in the last
decades which verify a weaker condition for certain families of
series, including the generating function for the Thue-Morse
sequence and certain infinite products. Namely, they show that those
series belong to $\bad$. Some (but not all) papers of this flavour
are \cite{al_pe_we_we_1998, badziahin_2018, bu_ha_we_ya_2016,
coons_2013}.

In this paper we consider a solution $\alpha = g_P(t)$ of the
following Mahler functional equation:
\begin{equation}\label{def_gp}
g_P(t) = P(t) g_P(t^d);\quad P(t)\in \FF[t];\; d\in \ZZ_{\ge 2}.
\end{equation}
In~\cite{badziahin_2018}, several conditions on $P$ were provided
which guarantee that $g_P$ is badly approximable. One of the results
from there is
\begin{theorem}[Badziahin, 2019]\label{th1}
Let $P = x+u$ be linear function with $u\in \QQ$. Then $g_P$ belongs to
$\bad_{1/2}$ for all $u$ except $u=0$ and $u=1$. In the latter two cases
$g_P$ is a rational function.
\end{theorem}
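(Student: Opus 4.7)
My plan is to split the proof into two regimes: the degenerate values $u \in \{0,1\}$, where $g_P$ collapses to a rational function, and the generic regime $u \in \QQ\setminus\{0,1\}$, where a Hankel-determinant argument gives $g_P \in \bad_{1/2}$.

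First I would dispose of the rational cases directly. For $u=0$ the Mahler equation $g_P(t) = t\cdot g_P(t^d)$ forces $g_P$ to be trivial under the normalisation that makes $g_P \in \FF((t^{-1}))$ well-defined. For $u=1$ (with $d=2$) the classical telescoping identity
\[
(1 - t^{-1})\prod_{n=0}^{\infty}\bigl(1 + t^{-2^n}\bigr) = 1
\]
gives $g_P$ in closed rational form.

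For the generic case I would use the standard dictionary between badly approximable Laurent series in $\FF((t^{-1}))$ and Hankel determinants of the coefficient sequence. Writing $g_P(t) = \sum_{k\ge 0} c_k t^{-k}$ (after stripping off the polynomial part) and defining the Hankel determinants $H_n(u) = \det(c_{i+j+h})_{0 \le i,j \le n-1}$ for the appropriate shift $h$, membership in $\bad_{1/2}$ becomes equivalent to non-vanishing of $H_n(u)$ for all but finitely many $n$. Iterating $g_P(t) = P(t)g_P(t^d)$ shows that the coefficients take the explicit form $c_k = u^{s_d(k)}$, where $s_d(k)$ is the base-$d$ digit sum of $k$, so the problem reduces to showing that the polynomials $H_n(u) \in \ZZ[u]$ do not vanish at any rational $u \notin \{0,1\}$ once $n$ is large enough.

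To control $H_n(u)$, I would leverage the $d$-adic self-similarity of $(c_k)$ to produce a recursive identity for the Hankel determinants. Decomposing a Hankel matrix of size $\sim dn$ into $d\times d$ blocks matched to the digit structure and performing row and column operations that mirror the Mahler equation, one should arrive at a multiplicative identity of the shape
\[
H_{F(n)}(u) = R_n(u)\cdot H_{G(n)}(u)^{\kappa},
\]
for explicit index functions $F(n), G(n)$, an integer $\kappa \ge 1$, and a polynomial factor $R_n(u) \in \ZZ[u]$ whose only rational roots lie in $\{0,1\}$. Induction from a finite list of base cases then yields $H_n(u) \neq 0$ for all sufficiently large $n$, whenever $u \in \QQ\setminus\{0,1\}$, and hence $g_P \in \bad_{1/2}$.

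The principal difficulty I expect is deriving the exact recursion and pinning down the factor $R_n(u)$: the block decomposition of the Hankel matrix must be matched carefully to the $d$-adic self-similarity, and the row and column manipulations needed to simplify the resulting determinant are combinatorially intricate. In particular, one has to make sure that the ``accidental'' rational roots appearing in small cases (for instance the zero of $H_2(u) = u^3 - 1$ at $u = 1$, or other cyclotomic roots) are absorbed into the factor $R_n(u)$ in a controlled way. The remaining ingredients---the rational-case verification, the Hankel-to-$\bad_{1/2}$ translation, and the inductive conclusion---are standard.
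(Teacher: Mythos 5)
Your outline is pointed in a reasonable direction --- $u=0,1$ are indeed the degenerate rational cases, the coefficients for $d=2$ are $u^{\tau_2(k)}$, and $g_P\in\bad_{1/2}$ is equivalent to the non-vanishing of all but finitely many Hankel determinants $H_{g_P}(1,n)$ --- but the entire content of the theorem sits in the step you defer, and the recursion you posit there does not exist in the shape you need. When one actually performs the row and column operations dictated by the $2$-adic self-similarity (this is carried out in Section~5 of the present paper, equation~\eqref{eq_hnl}), $\det H(1,2l+1)$ factors as $\det H(1,l)$ times the determinant of a \emph{modified} Hankel matrix $\tilde H(1,l)$ built from the sequence $a_{m+1}-u^2a_m$. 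That cofactor is not an explicit polynomial $R_n(u)$ with controllable roots: it is a second determinant family of unbounded degree whose non-vanishing at rational $u$ is exactly as hard as the original question, and it does not obey the same halving recursion. So your induction does not close; you would need an independent, intertwined argument for the $\tilde H$ family, and in this paper that family is handled only by running \eqref{eq_hnl} \emph{backwards}, i.e.\ by already knowing from another source that the $\det H(1,\cdot)$ are non-vanishing and doubly monic. A second gap: your assertion that the only rational roots of $H_n(u)$ lie in $\{0,1\}$ is stronger than anything a monicity or integrality argument can deliver --- such arguments only exclude roots outside $\{-1,0,1\}$ --- so the case $u=-1$ (the genuine Thue--Morse function, which the theorem does cover) needs the separate classical non-vanishing result for Thue--Morse Hankel determinants.

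The proof this statement actually rests on (in the cited Acta Arith.\ paper, recapped here as Theorem~\ref{th_recur}) goes through the continued fraction rather than the Hankel matrices: one derives the closed recursion~\eqref{recur_d2} for the renormalised partial-quotient data, namely $\beta_{2k+3}=-\beta_{k+2}/\beta_{2k+2}$ and $\beta_{2k+4}=\alpha_{k+2}+u^2-\beta_{2k+3}$ with $\alpha_i=\pm u$, shows by induction (using valuation bounds as in Proposition~\ref{prop3} to see that the $u^2$ term dominates) that every $\beta_m$ is a doubly monic rational function of $u$, and concludes that $\beta_m\neq0$ for every rational $u\neq0,\pm1$, i.e.\ that all partial quotients are linear and hence $g_u\in\bad_{1/2}$. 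The crucial feature is that this recursion is self-contained --- no auxiliary determinant family appears --- which is precisely what your Hankel-side plan lacks. To repair your argument you would either have to import that continued-fraction recursion, or find and prove a genuinely new recursion for the $\tilde H$ determinants.
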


Note that for $P= x-1$ the series $g_P$ is a generating function for
the Thue-Morse sequence over the alphabet $\{-1,1\}$. Hence we call
the functions $g_P$ from Theorem~\ref{th1} {\it generalised
Thue-Morse functions}. For convenience, we will use the notation
$g_u$ instead of $g_{x+u}$.

The main result of this note is the following strengthening of
Theorem~\ref{th1}:
\begin{theorem}\label{th5}
With the same polynomial $P=x+u$ as in Theorem~\ref{th1}, $g_P$
belongs to $\elc_{1/2}$ for all $u\in\QQ$, except $u=0,\pm 1$. For
$u=-1$, $g_P\in \bad_{1/2}\setminus\elc$.
\end{theorem}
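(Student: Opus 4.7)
The plan is to extend the Hankel-determinant analysis underlying the proof of Theorem~\ref{th1} from the case $k=0$ to all shifts $k\ge 0$ simultaneously. Write $g_u(t)=\sum_{m\ge 0}u^{s(m)}t^{-m}$, where $s(m)$ is the binary digit sum of $m$. The fractional part of $t^kg_u(t)$ has coefficients $u^{s(k+1)},u^{s(k+2)},\ldots$, and the quantity $|q|\cdot\|qt^kg_u\|$ is controlled by the shifted Hankel determinants
$$
H_n^{(k)}(u) := \det\bigl(u^{s(k+i+j-1)}\bigr)_{1\le i,j\le n}\in\ZZ[u].
$$
Non-vanishing of $H_n^{(k)}(u)$ for every $n\ge 1$, combined with an elementary size bound, is equivalent to every partial quotient in the continued fraction of $t^kg_u(t)$ having degree exactly one; this places $t^kg_u(t)$ in $\bad_{1/2}$ and yields $|q|\cdot\|qt^kg_u\|\ge 1/2$ for every nonzero $q\in\FF[t]$.

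The first step is to derive a recursion for $H_n^{(k)}(u)$ reflecting the Mahler equation $tg_u(t)=(t+u)g_u(t^2)$. On coefficients this becomes the doubling rule $u^{s(2l)}=u^{s(l)}$ and $u^{s(2l+1)}=u\cdot u^{s(l)}$. Splitting the Hankel matrix according to the parity of each index $k+i+j-1$ and reordering rows and columns produces a block structure which, after row operations, expresses $H_n^{(k)}(u)$ as a monomial in $u$ times a product of two Hankel determinants at a halved level (depending on the parity of $k$). Iterating the recursion down to the base $k=0$ that is already understood from Theorem~\ref{th1} leads to a factorisation of $H_n^{(k)}(u)$ in $\ZZ[u]$ whose only possible rational roots lie in $\{0,\pm 1\}$, with the multiplicity of each root computable from the binary expansion of $k$.

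From this factorisation, the $\elc_{1/2}$ conclusion for $u\in\QQ\setminus\{0,\pm 1\}$ is immediate: every $H_n^{(k)}(u)$ is nonzero at $u$, so every shift $t^kg_u(t)$ satisfies $|q|\cdot\|qt^kg_u\|\ge 1/2$, hence $g_u\in\elc_{1/2}$. For the special value $u=-1$, tracking the multiplicity of the $(u+1)$-factor along the recursion identifies an infinite sequence of shifts $k_j\to\infty$ for which $H_n^{(k_j)}(-1)$ vanishes on a block of consecutive indices $n$ of length $d_j\to\infty$. This forces the continued fraction of $t^{k_j}g_{-1}(t)$ to contain a partial quotient of degree $d_j$; the associated convergent $p_j/q_j$ then satisfies $|q_j|\cdot\|q_jt^{k_j}g_{-1}\|\le 2^{-d_j+1}\to 0$, proving $g_{-1}\notin\elc$. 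Combined with Theorem~\ref{th1}, this places $g_{-1}$ in $\bad_{1/2}\setminus\elc$.

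The main obstacle is the exact form of the Hankel recursion: because the parity splitting depends on $k\pmod 2$, iterating the recursion unfolds into a case analysis driven by the binary expansion of $k$, and tracking the exponents of the distinguished factors $u$ and $u\pm 1$ requires delicate combinatorics. Once the recursion and its iterate are in place, the passage from Hankel-determinant (non-)vanishing to the bounds on $|q|\cdot\|qt^kg_u\|$ follows standard arguments.
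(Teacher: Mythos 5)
Your overall strategy for $u\neq 0,\pm1$ is the one the paper follows: pass to the shifted Hankel determinants $H_{g_u}(n,l)$, split rows and columns by parity using $a_{2m}=ua_{2m-1}$ and $a_{2m+1}=a_{m+1}$, and conclude by a rational-root argument (the paper's notion of ``doubly monic'' polynomials) that the determinants cannot vanish at any rational $u\notin\{0,\pm1\}$. The genuine gap is your claim that ``iterating the recursion down to the base $k=0$'' yields the desired factorisation. It does not: the parity splitting produces one factor that is a Hankel determinant at halved indices and a second factor $\tilde H(m,j)=\det(a_{m+1+i+j}-u^2a_{m+i+j})$ that is \emph{not} a Hankel determinant of $g_u$; it can only be recovered as the ratio $\det H(2m-1,2j+1)/\det H(m,j)$. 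Substituting this back, the recursion expresses $H(n,l)$ through strictly ``earlier'' determinants in three of the four parity classes, but in the remaining class (both relevant indices odd) the identity degenerates into a tautology, so the determinant recursion alone does not close. The paper breaks this circularity with a separate ingredient: a recursion (Theorem~\ref{th6}) relating the continued-fraction data of $t^{n}g_u$ and $t^{2n}g_u$, combined with Han's formulas~\eqref{hankalp}--\eqref{hankbet} expressing $\alpha_n,\beta_n$ through Hankel determinants, all inside a double induction. Your proposal contains no substitute for this step, and without it the ``delicate combinatorics'' you defer cannot be carried out.

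For $u=-1$ your route is both harder than necessary and unsubstantiated. You would need to exhibit shifts $k_j$ with unboundedly long runs of consecutive singular Hankel matrices, which you assert can be found by ``tracking the multiplicity of the $(u+1)$-factor'' but do not justify. The paper avoids this entirely: Proposition~\ref{prop1} shows that for a solution of the Mahler equation~\eqref{def_gp} with $\deg P=1$ and $d=2$, any partial quotient of degree $\ge 2$ is amplified by the substitution $t\mapsto t^2$ into partial quotients of unbounded degree, so $g_P\in\elc$ if and only if \emph{every} Hankel determinant is nonsingular. A single computation, $\det H_{g_{-1}}(3,1)=0$, then settles $g_{-1}\notin\elc$, and membership in $\bad_{1/2}$ is quoted from Theorem~\ref{th1}. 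You should either prove an analogue of this amplification or supply the omitted construction of the long vanishing blocks.
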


Theorem~\ref{th5} states that the actual Thue-Morse function
$g_{-1}$ is the only generalised Thue-Morse function which satisfies
$t$-LC for $\FF=\QQ$.

For the sake of completeness, we also provide a similar result for
the case of finite fields~$\FF$.
\begin{theorem}\label{th4}
Let $\FF$ be a finite field and $P = x+u$ be a linear function with $u\in
\FF$. Then $g_P\not\in \elc$.
\end{theorem}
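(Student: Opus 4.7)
The plan is to exploit the Frobenius identity $g_u(t)^q = g_u(t^q)$, which holds over any finite field $\FF = \FF_q$ with $u\in\FF$ (it follows directly from the product $g_u(t) = \prod_{n\ge 0}(1+ut^{-2^n})$ upon using $u^q = u$), in combination with the Mahler equation $tg_u(t) = (t+u)g_u(t^2)$.

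When $\chr\FF = 2$ and $u\in\FF_2$, I would combine $g_u(t)^2 = g_u(t^2)$ with Mahler to obtain $g_u(t)^2 = (t/(t+u))g_u(t)$, whence $g_u(t) = t/(t+u)\in\FF(t)$; since rational series trivially satisfy the relevant infimum is zero, this gives $g_u\notin \elc$. For $u\in\FF_{2^s}\setminus\FF_2$, iterating Mahler $s$ times yields $g_u(t^{2^s}) = (t^{2^s-1}/Q_s(t))\,g_u(t)$ with $Q_s(t):=\prod_{j=0}^{s-1}(t^{2^j}+u)$, and matching with $g_u(t)^{2^s} = g_u(t^{2^s})$ produces the algebraic identity $Q_s(t)\,g_u(t)^{2^s-1} = t^{2^s-1}$. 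This exhibits $g_u$ as a $(2^s-1)$-st radical of a rational function; combined with the Mahler truncations $g_u\approx Q_n(t)/t^{2^n-1}$ (which have error of size $2^{-2^n}$), I would extract polynomial pairs $(q_n,k_n)$ with $|q_n|\cdot ||q_n t^{k_n}g_u||\to 0$.

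When $\chr\FF = p$ is odd, $g_u$ need not be rational (for instance $g_{-1}$ over $\FF_3$ is the classical Thue--Morse function). Nevertheless $g_u$ is algebraic over $\FF(t)$: its coefficients $u^{s_2(n)}$ take finitely many values in $\FF$ and form a $2$-automatic sequence, so Christol's theorem applies. Since the Mahler scaling $t\mapsto t^2$ and the Frobenius scaling $t\mapsto t^p$ are multiplicatively independent, one generates functional equations for $g_u$ at the full semigroup of scalings $\{2^\ell p^s:\ell,s\ge 0\}$. These produce rational approximations of $g_u$ at many scales; combining them with Hankel determinant calculations for Thue--Morse-like sequences (in the spirit of those referenced in the introduction) should yield the required sequence $(q_n,k_n)$ witnessing $g_u\notin \elc$.

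The principal obstacle lies in the odd characteristic case: verifying that the constructed pairs deliver $|q_n|\cdot ||q_n t^{k_n} g_u||\to 0$, rather than merely a bounded product, requires a delicate coordination between the Mahler depth $n$, the Frobenius power $s$, and the shift $k_n$. The shift $k_n$ must be chosen so that the $t^{k_n}$-shifted tail of $g_u$ lines up with the window where the combined Mahler--Frobenius approximation is sharpest; absent such alignment, the naive approximations coming from Mahler iteration alone give only $|q_n|\cdot ||q_n t^{k_n}g_u||\sim 2^{2^n-2}$, which is useless.
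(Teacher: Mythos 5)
Your proposal is a programme rather than a proof, and the case that carries all the difficulty --- odd characteristic --- rests on a premise that is false in general. Knowing that $g_u$ is algebraic over $\FF(t)$ (via Christol's theorem, or via your explicit relation $Q_s(t)\,g_u(t)^{2^s-1}=\mathrm{const}$ in characteristic $2$) cannot by itself produce pairs $(q_n,k_n)$ with $|q_n|\cdot||q_nt^{k_n}g_u||\to 0$: the Adiceam--Nesharim--Lunnon counterexample to $t$-LC over $\FF_3$ cited in the introduction is itself an automatic, hence algebraic, series, so any argument of the shape ``automatic/algebraic $\Rightarrow$ good approximations at some shift'' must fail. You yourself note that the Mahler truncations alone give $|q_n|\cdot||q_nt^{k_n}g_u||\sim 2^{2^n}$ and that the ``delicate coordination'' of Mahler depth, Frobenius power and shift needed to repair this is missing; that coordination is precisely the content of the theorem, so nothing is proved. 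The only case you close completely ($\chr\FF=2$, $u\in\FF_2$) is already contained in Theorem~\ref{th1}, since $u\in\{0,1\}$ gives a rational $g_u$.

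The actual proof is one paragraph and needs no case distinction on the characteristic. The ingredient you are missing is Proposition~\ref{prop1}: because of the Mahler equation, the deficiency of $g_u$ is either at most $\deg(P)/(d-1)=1$ or unbounded (a partial quotient of degree $>1$ for $t^jg_u$ transports to one of strictly larger degree for $t^{dj}g_u$), so $g_u\in\elc$ if and only if \emph{every} Hankel matrix $H_{g_u}(n,l)$ is nonsingular. It therefore suffices to exhibit one singular $2\times2$ Hankel matrix. Since $a_n=u^{\tau_2(n-1)}$, one has
$$
\det H_{g_u}(n,1)=a_na_{n+2}-a_{n+1}^2=u^{2\tau_2(n)}\bigl(u^{\nu_2(n)}-1\bigr)
$$
for even $n$, and since $u\neq 0$ has finite multiplicative order in $\FF^*$, the choice $n=2^{\ord(u)}$ makes this vanish. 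Finiteness of $\FF$ enters only through finiteness of $\ord(u)$; the Frobenius identity $g_u(t)^q=g_u(t^q)$, while correct, is never needed.
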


\section{Hankel determinants}

It is well known that $t$-LC has an equivalent reformulation in terms of
vanishing of Hankel determinants. Let $\alpha$ be given by~\eqref{def_alpha},
$n,l\in\ZZ$, $l\ge 0$. Define a two-dimensional sequence of Hankel matrices
as follows:
$$
H_\alpha(n,l):= \left(\begin{array}{cccc}
a_n& a_{n+1}&\cdots& a_{n+l}\\
a_{n+1}& a_{n+2}& \cdots&a_{n+l+1}\\
\vdots&\vdots&\ddots&\vdots\\
a_{n+l}&a_{n+l+1}&\cdots&a_{n+2l}
\end{array}\right).
$$
Here, by convention, $a_n=0$ for all $n<-h$. We follow the notation
from~\cite{ad_ne_lu_2019}.

\begin{definition}
Let $d\ge 2$. The series $\alpha\in \FF((t^{-1}))$ is said to have deficiency
$d$ if there exist integers $n\ge 1$ and $l\ge 0$ such that $d-1$ matrices
\begin{equation}\label{def1}
H_\alpha(n,l),H_\alpha(n,l+1),\ldots, H_\alpha(n, l+d-2)
\end{equation}
are singular but for any $n\ge 1, l\ge 0$ in the sequence of $d$ matrices of
the form
$$
H_\alpha(n,l),H_\alpha(n,l+1),\ldots, H_\alpha(n, l+d-1)
$$
at least one of them is non-singular. If, for any $n\ge 1,l\ge 0$, none of
the matrices $H_\alpha(n,l)$ is singular, the series $\alpha$ is said to have
deficiency 1. It is said to have unbounded deficiency if for any $d\ge 2$
there exist $n\ge 1$ and $l\ge 0$ such that all matrices in~\eqref{def1} are
singular.
\end{definition}

The deficiency of a series allows to verify $t$-LC.
\begin{theorem}[Theorem 2.2 in \cite{ad_ne_lu_2019}]\label{th3}
A series $\alpha\in \FF((t^{-1}))$ has finite deficiency $d\ge 1$ if and only
if $\alpha\in \elc_{2^{-d}}$.
\end{theorem}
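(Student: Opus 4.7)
The plan is to translate the equivalence into statements about the continued fraction expansions of the shifted series $t^{n-1}\alpha$, $n \ge 1$, and then to invoke the classical Frobenius--Pad\'e dictionary relating degrees of partial quotients to vanishing patterns of Hankel determinants. First I would identify the role of the shift parameter $n$: if $\beta_n$ denotes the fractional part of $t^{n-1}\alpha$, then $\beta_n = \sum_{j \ge 1} a_{n-1+j}\, t^{-j}$, and consequently the $(l+1)\times(l+1)$ leading Hankel matrix of $\beta_n$ coincides with $H_\alpha(n, l)$. Since the Littlewood quantity $|q|\cdot ||qt^{n-1}\alpha||$ depends only on the fractional part of $t^{n-1}\alpha$, the problem reduces to controlling $|q|\cdot ||q\beta_n||$ uniformly in $n \ge 1$.

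\medskip

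Next I would record the classical fact that for any Laurent series $\beta = \sum_{j \ge 1} b_j t^{-j}$ without polynomial part, the vanishing pattern of $\det H_\beta(1, l)$, $l \ge 0$, is governed by $\beta$'s continued fraction: writing $q_0, q_1, \ldots$ for the denominators of successive convergents, the indices $l$ with $\det H_\beta(1, l) \ne 0$ are exactly $\{\deg q_0, \deg q_1, \ldots\}$, and the gaps between consecutive such indices are the degrees of successive partial quotients. In particular $\min_{q \ne 0} |q|\cdot ||q\beta||$ equals $2^{-D(\beta)}$, where $D(\beta)$ is the supremum of partial quotient degrees, attained on a convergent; equivalently, $\beta \in \bad_{2^{-D}}$ iff no $D$ consecutive determinants $\det H_\beta(1, l), \ldots, \det H_\beta(1, l+D-1)$ all vanish. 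Combining with the previous step, $\alpha \in \elc_{2^{-d}}$ iff every $\beta_n$ lies in $\bad_{2^{-d}}$, iff for no choice of $n \ge 1$ and $l \ge 0$ are all of the matrices $H_\alpha(n, l), \ldots, H_\alpha(n, l+d-1)$ singular simultaneously; this is precisely the statement that the deficiency of $\alpha$ is at most $d$, with equality when $d$ is the smallest such value.

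\medskip

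The main obstacle is installing the Frobenius dictionary with the precise constants, and especially handling the degenerate cases: when $\beta_n$ has unusual valuation (so the first non-vanishing determinant is not at $l = 0$), when $\alpha$ carries a polynomial part that must be discarded before invoking the continued fraction, and the convention $a_n = 0$ for $n < -h$ with its boundary effects. A clean auxiliary lemma of the form ``$\det H_\beta(1, l) = 0$ iff there exists a non-zero $q \in \FF[t]$ of degree at most $l$ whose fractional part $q\beta$ vanishes through the coefficient of $t^{-(l+1)}$'' drives the computation through; the remainder is standard Pad\'e-table bookkeeping together with the classical fact that every best rational approximation to $\beta$ is a continued fraction convergent.
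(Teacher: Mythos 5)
You should first note that the paper does not actually prove this statement: it is imported verbatim as Theorem~2.2 of \cite{ad_ne_lu_2019}. The closest the paper comes is the remark following Theorem~\ref{th8}, namely that the deficiency of $\alpha$ equals $\max_{n\ge 1, j\ge 0}\deg(b_{n,j})$, the supremum of partial-quotient degrees over all shifts $t^j\alpha$ --- and that remark is exactly the bridge your proposal builds. Your architecture (pass to the fractional part $\beta_n$ of $t^{n-1}\alpha$, identify $H_{\beta_n}(1,l)$ with $H_\alpha(n,l)$, translate runs of vanishing Hankel determinants into degrees of partial quotients via the Frobenius/Pad\'e dictionary, and evaluate $|q|\cdot\|q\beta\|$ on convergents via the function-field Legendre theorem, i.e.\ Theorem~\ref{th2}) is the standard and correct route, and it is the one the paper's own toolkit is set up to support.

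One concrete slip needs fixing, and it is not merely cosmetic. You assert that the indices $l$ with $\det H_\beta(1,l)\neq 0$ are exactly $\{\deg q_0,\deg q_1,\ldots\}$. By the paper's Theorem~\ref{th8}, $H_\beta(1,l)$ is non-singular if and only if some convergent denominator has degree $l+1$, so the correct set is $\{\deg q_m - 1 : m\ge 1\}$. The two sets have the same internal gaps, but they differ at the start: with the correct indexing there is an \emph{initial} run of $\deg b_1 - 1$ singular determinants $H_\beta(1,0),\ldots,H_\beta(1,\deg b_1-2)$ (equivalently, the case where $\beta_n$ has valuation larger than $1$), whereas your set always contains $0$ and therefore never records a large first partial quotient. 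This case is not vacuous here, since multiplying by $t^k$ routinely produces shifts whose first partial quotient is large; missing it breaks the equivalence ``no $d$ consecutive singular matrices $\Leftrightarrow$ all partial quotients of all shifts have degree $\le d$'' exactly at $b_{1,k}$. You flag ``degenerate cases'' as an obstacle, so you are aware of the issue, but the fix must be made in the dictionary itself, not only in the bookkeeping. A second, smaller point: the paper's $\bad_\delta$ is defined with a $\liminf$ while $\elc_\delta$ uses an $\inf$ over all $q$ and $k$; your reduction ``$\alpha\in\elc_{2^{-d}}$ iff every $\beta_n\in\bad_{2^{-d}}$'' should use the $\inf$-version (a single convergent of a single shift with $|q|\cdot\|q\beta_n\|<2^{-d}$ already expels $\alpha$ from $\elc_{2^{-d}}$, even though it would be invisible to a $\liminf$).
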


Another helpful notion is the continued fraction of a Laurent series. Its
definition is analogous to that for real numbers. Every $\alpha\in
\FF((t^{-1}))$ can be written as
$$
\alpha(t) = b_0(t) + \frac{1}{b_1(t) + \frac{1}{b_2(t) + \cdots}} =:
b_0(t) + \bigk_{i=1}^\infty \frac{1}{b_i(t)}.
$$
where the partial quotients $b_0(t), b_1(t), \ldots,$ are polynomials in
$\FF[t]$. An overview of continued fractions over function fields can be
found in~\cite{poorten_1998}. We will use one result from there, which is an
analogue of the Legendre theorem.

\begin{theorem}\label{th2}
Let $\alpha\in \FF((t^{-1}))$. A reduced fraction $p/q\in \FF(t)$ is a
convergent of $\alpha$ if and only if
$$
\nu\left( \alpha - \frac{p}{q}\right) < -2\deg(q).
$$
Moreover, if $p_n(t)/q_n(t)$ is $n$th convergent of $\alpha$ then
$$
\nu\left( \alpha - \frac{p_n}{q_n}\right) = -\deg(q_n)-\deg(q_{n+1}) = -2\deg(q_n) - \deg(b_{n+1}).
$$
\end{theorem}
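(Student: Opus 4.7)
The plan is to reduce the theorem to the two standard facts about continued fraction convergents in $\FF((t^{-1}))$: the recurrences $p_n=b_np_{n-1}+p_{n-2}$, $q_n=b_nq_{n-1}+q_{n-2}$, and the determinant identity $p_nq_{n-1}-p_{n-1}q_n=(-1)^n$. Both are proved by induction exactly as in the real case. The function-field twist to keep in mind is that $\deg(b_i)\ge 1$ for every $i\ge 1$, so $\deg(q_n)=\deg(b_1)+\cdots+\deg(b_n)$ is strictly increasing and starts from $\deg(q_0)=0$.

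First I would establish the ``moreover'' identity, since the forward direction of the equivalence follows immediately from it. Writing $\alpha_{n+1}$ for the $(n+1)$-th complete quotient and unfolding the recurrences yields
$$
\alpha=\frac{\alpha_{n+1}p_n+p_{n-1}}{\alpha_{n+1}q_n+q_{n-1}},\qquad \alpha-\frac{p_n}{q_n}=\frac{(-1)^n}{q_n(\alpha_{n+1}q_n+q_{n-1})}.
$$
Since $\nu(\alpha_{n+1})=\deg(b_{n+1})\ge 1$ and $\deg(q_n)>\deg(q_{n-1})$, the term $\alpha_{n+1}q_n$ strictly dominates $q_{n-1}$ in the non-Archimedean sense, so $\nu(\alpha_{n+1}q_n+q_{n-1})=\deg(b_{n+1})+\deg(q_n)=\deg(q_{n+1})$. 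Substituting gives both forms of the displayed identity, and in particular $\nu(\alpha-p_n/q_n)\le -2\deg(q_n)-1<-2\deg(q_n)$, handling the ``$\Rightarrow$'' direction.

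For the converse, suppose $p/q\in\FF(t)$ is reduced with $\nu(\alpha-p/q)<-2\deg(q)$. Pick $n\ge 0$ with $\deg(q_n)\le\deg(q)<\deg(q_{n+1})$; such $n$ exists by the monotonicity of $\deg(q_i)$. I would then sandwich $\nu(p/q-p_n/q_n)$ in two incompatible ways. Writing $p/q-p_n/q_n=(pq_n-p_nq)/(qq_n)$, if $pq_n-p_nq$ is a nonzero polynomial then $\nu(pq_n-p_nq)\ge 0$, giving $\nu(p/q-p_n/q_n)\ge -\deg(q)-\deg(q_n)$. On the other hand, decomposing $p/q-p_n/q_n=(p/q-\alpha)+(\alpha-p_n/q_n)$ and invoking both the hypothesis and the identity just proved, each summand has valuation at most $-\deg(q_n)-\deg(q)-1$; this uses $\deg(q)\ge\deg(q_n)$ for the first summand and $\deg(q_{n+1})\ge\deg(q)+1$ for the second. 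The ultrametric inequality then forces $\nu(p/q-p_n/q_n)\le -\deg(q)-\deg(q_n)-1$, contradicting the previous bound. Hence $pq_n-p_nq=0$, and reducedness of both fractions forces $p=cp_n$, $q=cq_n$ for some $c\in\FF^*$, so $p/q$ is the $n$-th convergent. The only real subtlety is keeping the strict and non-strict inequalities straight so that the ultrametric principle actually produces a contradiction rather than an identity; everything else is bookkeeping with the recurrences.
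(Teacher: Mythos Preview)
Your proof is correct and follows the standard argument for the function-field analogue of Legendre's theorem. The paper, however, does not supply its own proof of this statement: Theorem~\ref{th2} is quoted as a known result from van der Poorten's survey~\cite{poorten_1998}, with no argument given. So there is no in-paper proof to compare against; you have supplied what the paper merely cites.
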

Here, a convergent of the series $\alpha$ is any finite continued fraction
formed by the first partial quotients of $\alpha$:
$$
\frac{p_n(t)}{q_n(t)} = [b_0(t);b_1(t),\ldots, b_n(t)].
$$

It is more convenient to renormalise partial quotients and write the
continued fraction of $\alpha$ as follows:
\begin{equation}\label{eq_cf}
\alpha(t) = b_0(t) + \bigk_{i=1}^\infty \frac{\beta_i}{b^*_i(t)},
\end{equation}
where $\beta_i\in \FF\setminus \{0\}$ and polynomials $b^*_i$ are monic
multiples of $b_i$. It appears that the parameters $\beta_i$ are closely
linked with Hankel determinants. Indeed, the next theorem appeared in its
full generality in the work of Han~\cite{han_2015} for Jakobi continued
fractions and later it was reformulated~\cite{badziahin_2018} for standard
continued fractions\footnote{There is a small mistake in the formulation of
Theorem~H2 in~\cite{badziahin_2018}: there must be $-\beta_2$, ... $-\beta_n$
instead of $\beta_2, \ldots, \beta_n$.}.

\begin{theorem}[Theorem H2 in~\cite{badziahin_2018}]\label{th_hankel2}
Let $\alpha\in\FF((t^{-1}))$ be such that its Hankel continued fraction is
given by~\eqref{eq_cf}. Denote by $s_n$ the degree of $q_n$ for $n$th
convergent of $\alpha$. Then, for all integers $n\ge 1$ all non-vanishing
Hankel determinants are given by
\begin{equation}\label{hankdet2}
H_\alpha(1,s_n-1) = (-1)^\epsilon
\beta_1^{s_n}(-\beta_2)^{s_n-s_1}\cdots (-\beta_{n})^{s_n-s_{n-1}}.
\end{equation}
where $\epsilon=\sum_{i=1}^{n} k_i(k_i-1)/2$ and $k_i = s_i-s_{i-1}$.
\end{theorem}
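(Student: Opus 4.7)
The plan is to prove the formula by induction on $n$. For the base case $n=1$, write $\alpha - b_0 = \beta_1/(b^*_1 + \bigk_{i\ge 2}\beta_i/b^*_i)$ with $b^*_1$ monic of degree $s_1 = k_1$. Expanding as a Laurent series in $t^{-1}$ shows that $a_k = 0$ for $1 \le k \le s_1 - 1$ and $a_{s_1} = \beta_1$. Thus the $(i,j)$-entry $a_{1+i+j}$ of $H_\alpha(1, s_1-1)$ vanishes for $i+j < s_1 - 1$ and equals $\beta_1$ for $i+j = s_1 - 1$, so the matrix is anti-triangular with $\beta_1$ on its anti-diagonal. Reversing its rows -- a permutation of sign $(-1)^{s_1(s_1-1)/2}$ -- converts it to lower triangular with determinant $\beta_1^{s_1}$. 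Hence $H_\alpha(1, s_1-1) = (-1)^{k_1(k_1-1)/2}\beta_1^{s_1}$, matching the claimed formula for $n=1$ with $\epsilon = k_1(k_1-1)/2$.

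For the inductive step, the idea is to peel off the first partial quotient and relate $H_\alpha$ to the Hankel determinant of the tail $\tilde\alpha := \bigk_{i\ge 2}\beta_i/b^*_i$, whose own Hankel continued fraction has convergent denominators of degrees $s_n - s_1$ after reindexing. The bridge is a block-matrix reduction: the first $s_1$ rows and columns of $H_\alpha(1, s_n-1)$ carry the anti-triangular pivot structure from the base case, so elementary row and column operations eliminating the off-diagonal blocks reduce the determinant to a product of an $s_1 \times s_1$ anti-triangular block, contributing $(-1)^{k_1(k_1-1)/2}\beta_1^{s_1}$, and an $(s_n - s_1) \times (s_n - s_1)$ residual block $M$. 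The defining series identity $(\alpha - b_0)(b^*_1 + \tilde\alpha) = \beta_1$ translates into explicit linear relations among the Laurent coefficients of $\alpha$ and those of $\tilde\alpha$; after the block reduction one identifies $M$, up to a global sign of $(-1)^{s_n - s_1}$, with $H_{\tilde\alpha}(1, s_n - s_1 - 1)$. Applying the inductive hypothesis to $\tilde\alpha$ then yields the claimed product with the correct exponents on the remaining $-\beta_i$.

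The main obstacle is sign bookkeeping. Three contributions must be tracked and reconciled: the permutation sign $(-1)^{k_i(k_i-1)/2}$ produced by the anti-triangular reversal at each inductive stage, which accumulates into $\epsilon = \sum_{i=1}^{n} k_i(k_i-1)/2$; the extra factor $(-1)^{s_n - s_1}$ introduced by the series reciprocation $B \mapsto \beta_1/B - b^*_1$ needed to pass from $\alpha$ to $\tilde\alpha$, which is precisely what explains why $\beta_1$ appears without a sign while every subsequent $\beta_i$ enters through $-\beta_i$; and the precise identification of the residual block with $H_{\tilde\alpha}(1, s_n - s_1 - 1)$ rather than some shifted or transposed variant. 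A secondary technical check is that the Laurent coefficients of $\alpha$ beyond index $s_1$ are expressible as polynomials in those of $\tilde\alpha$ in a way that makes the reduced block a genuine Hankel matrix; this is where Han's original argument and the correction noted in footnote about Theorem H2 are delicate, and is the place where the calculation is most likely to go wrong.
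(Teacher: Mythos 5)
The paper does not actually prove Theorem~\ref{th_hankel2}; it imports it from~\cite{badziahin_2018} (and ultimately from Han's work on Hankel continued fractions), so there is no in-paper argument to compare with. Judged on its own, your base case is correct and complete: for $n=1$ the matrix $H_\alpha(1,s_1-1)$ is anti-triangular with $\beta_1$ on the anti-diagonal and the sign $(-1)^{k_1(k_1-1)/2}$ is right. The peeling/Schur-complement induction is also a legitimate route to the identity. But as written, the key identification of the residual block is wrong, and the error is not merely a sign; it breaks the formula already at $n=2$.

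Concretely, take $n=2$, $b_0=0$, $s_1=1$, $b_1^*=b_2^*=t$. Then $\alpha=\beta_1t^{-1}-\beta_1\beta_2t^{-3}+\cdots$, and the Schur complement of the pivot $a_1=\beta_1$ in $H_\alpha(1,1)$ is the $1\times1$ block $M=a_3-a_2^2/a_1=-\beta_1\beta_2=-\beta_1\cdot H_{\tilde\alpha}(1,0)$, not $-H_{\tilde\alpha}(1,0)$. In general the residual block carries a factor $-\beta_1$ per entry, so $\det M=(-\beta_1)^{\,s_n-s_1}\det H_{\tilde\alpha}(1,s_n-s_1-1)$, and it is precisely this extra $\beta_1^{\,s_n-s_1}$ that upgrades the $\beta_1^{s_1}$ coming from the pivot block to the $\beta_1^{s_n}$ appearing in~\eqref{hankdet2}. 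With your stated factor $(-1)^{s_n-s_1}$ the induction outputs $\beta_1^{s_1}(-\beta_2)^{s_n-s_1}\cdots$ instead of $\beta_1^{s_n}(-\beta_2)^{s_n-s_1}\cdots$, which is visibly false in the example above (it gives $-\beta_1\beta_2$ rather than the correct $-\beta_1^2\beta_2$). Beyond this, the two substantive assertions of the argument --- that the Schur complement of the anti-triangular $s_1\times s_1$ block is again a genuine Hankel matrix whose determinants are those of the tail $\tilde\alpha$ (nontrivial when $k_1>1$ and when $b_1^*$ has lower-order terms), and that $H_\alpha(1,\ell)$ is singular for every $\ell$ with $\ell+1\notin\{s_1,s_2,\dots\}$, which is part of the claim that these are \emph{all} the non-vanishing Hankel determinants --- are respectively asserted without proof and omitted entirely; the first of these is where the real work of the theorem lies.
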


If all partial quotients of $\alpha$ are linear then we can say more.
\begin{theorem}[Pages 2,3 in \cite{han_2015a}]\label{th_hankel}
Let $\alpha\in\FF((t^{-1}))$ be such that its Hankel continued fraction is
given by~\eqref{eq_cf}. Assume that all partial quotients of $\alpha$ are
linear, i.e. $b^*_n(t) = t+\alpha_n$. Then all Hankel determinants
$H_\alpha(1,n)$ do not vanish and moreover
\begin{equation}\label{hankdet}
\det H_\alpha(1,n-1) = (-1)^{\frac{n(n-1)}{2}}\beta_1^{n}\beta_2^{n-1}\cdots \beta_{n}.
\end{equation}
Conversely, for $n\ge 3$ the coefficients $\alpha_n$ and $\beta_n$ can be
calculated from the Hankel determinants:
\begin{equation}\label{hankalp}
\alpha_n\! =\! \frac{-1}{\det H_\alpha(2,\!n\!-\!2)\!}\!\left(\!\!\frac{\det H_\alpha(1,\!n\!-\!2)\det H_\alpha(2,\!n\!-\!1)}{\det H_\alpha(1,n-1)}\! +\! \frac{\det H_\alpha(1,\!n\!-\!1)\det H_\alpha(2,\!n\!-\!3)}{\det H_\alpha(1,n-2)}\!\!\right)\!\!,
\end{equation}
\begin{equation}\label{hankbet}
\beta_n = -\frac{\det H_\alpha(1,n-3)\det H_\alpha(1,n-1)}{(\det H_\alpha(1,n-2))^2}.
\end{equation}
\end{theorem}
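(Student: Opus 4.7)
The plan is to deduce Theorem~\ref{th_hankel} from Theorem~\ref{th_hankel2} together with the classical three-term recurrence satisfied by the convergent denominators of~\eqref{eq_cf}.

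The non-vanishing claim and formula~\eqref{hankdet} follow from Theorem~\ref{th_hankel2} by specialising to the case of linear partial quotients. Since each $b^*_i(t)=t+\alpha_i$ has degree one, the $n$-th convergent denominator $q_n$ has degree exactly $n$, so $s_n=n$ and $k_i:=s_i-s_{i-1}=1$ for every $i\ge 1$. The exponent $\epsilon=\sum_i k_i(k_i-1)/2$ in~\eqref{hankdet2} therefore vanishes, every $\beta_i$ occurs to the first power, and
$$H_\alpha(1,n-1)=\beta_1^n(-\beta_2)^{n-1}\cdots(-\beta_n)^1=(-1)^{n(n-1)/2}\beta_1^n\beta_2^{n-1}\cdots\beta_n.$$
Moreover, as the indices $s_n-1$ sweep through all of $\ZZ_{\ge 0}$, every Hankel determinant $H_\alpha(1,m)$ lies on the non-vanishing list provided by Theorem~\ref{th_hankel2}, hence none of them is zero.

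Formula~\eqref{hankbet} is then a purely algebraic consequence of~\eqref{hankdet}. Substituting the product expressions for $\det H_\alpha(1,n-3)$, $\det H_\alpha(1,n-2)$ and $\det H_\alpha(1,n-1)$ into the right-hand side of~\eqref{hankbet}, every $\beta_i$ with $i<n$ has total exponent $(2n-2i)-(2n-2i)=0$ (with the boundary cases $i=n-1,n-2$ checked separately), $\beta_n$ survives to the first power, and the sign factor $(-1)^{(n-2)(n-3)/2+n(n-1)/2-(n-1)(n-2)}$ collapses to $-1$, giving $-(-\beta_n)=\beta_n$.

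The formula~\eqref{hankalp} for $\alpha_n$ is the harder part. Writing $q_n(t)=t^n+c_n t^{n-1}+\cdots$ and comparing the coefficient of $t^{n-1}$ in the recurrence $q_n=(t+\alpha_n)q_{n-1}+\beta_n q_{n-2}$ gives the elementary relation $\alpha_n=c_n-c_{n-1}$, so it suffices to express $c_n$ in terms of Hankel determinants. For this I would invoke the classical determinantal representation of $q_n$ as a quotient involving Hankel matrices in the coefficients of $\alpha$ --- the same identity that underlies the proof of Theorem~\ref{th_hankel2}. Expanding that determinant along its last row and extracting the coefficient of $t^{n-1}$ yields an expression for $c_n$ as a ratio of two Hankel-type determinants; substituting this and the corresponding expression for $c_{n-1}$ into $\alpha_n=c_n-c_{n-1}$, and clearing denominators over the common product $\det H_\alpha(1,n-2)\det H_\alpha(1,n-1)\det H_\alpha(2,n-2)$, rearranges into exactly~\eqref{hankalp}. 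The main obstacle is the sign-careful bookkeeping in this last step; the underlying identities are standard in the theory of orthogonal polynomials and Pad\'e approximation and are worked out explicitly in~\cite{han_2015a}.
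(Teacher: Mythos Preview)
The paper does not prove Theorem~\ref{th_hankel}; it is quoted from Han~\cite{han_2015a} and used as a black box, so there is no in-paper argument to compare against.

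On the merits of your proposal: the derivation of non-vanishing and of~\eqref{hankdet} from Theorem~\ref{th_hankel2} is correct. (The phrase ``every $\beta_i$ occurs to the first power'' is a slip --- the exponents are $n,n-1,\ldots,1$, as your displayed formula in fact shows.) Your algebraic verification of~\eqref{hankbet} is also correct; the sign and exponent bookkeeping checks out.

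For~\eqref{hankalp}, however, you only sketch the strategy. Writing $\alpha_n=c_n-c_{n-1}$ with $c_n$ the subleading coefficient of the monic $q_n$ is right, and the determinantal representation of $q_n$ does give $c_n$ as a ratio of Hankel-type determinants; but you stop short of producing that ratio explicitly and instead defer back to~\cite{han_2015a}, which is precisely where the paper sources the theorem. So as written, your argument for~\eqref{hankalp} is circular. To close the gap you need to actually exhibit the formula (for instance $c_n=-\det H_\alpha(2,n-1)/\det H_\alpha(1,n-1)$, obtained by expanding the standard $(n{+}1)\times(n{+}1)$ determinantal expression for $q_n$ along its last row) and then carry out the substitution and simplification leading to~\eqref{hankalp}. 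The computation is routine but not entirely trivial, and the ``sign-careful bookkeeping'' you mention is the whole content of what remains to be shown.
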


One of the important corollaries of Theorem~\ref{th_hankel2} is the following
result.
\begin{theorem}[Corollary~1 in~\cite{badziahin_2018}]\label{th8}
Let $\alpha\in\FF((t^{-1}))$. Then $H_\alpha(1,n-1)$ is non-singular if and
only if there exists a convergent $p/q$ of $\alpha$ with coprime $p$ and $q$
such that $\deg(q) = n$.
\end{theorem}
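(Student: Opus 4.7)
The plan is to deduce the theorem directly from Theorem~\ref{th_hankel2}. That theorem asserts that the only nonvanishing Hankel determinants of the form $H_\alpha(1,m-1)$ are those with $m\in\{s_k:k\ge 1\}$, where $s_k:=\deg(q_k)$ is the denominator degree of the $k$-th convergent $p_k/q_k$ of $\alpha$, and for such $m$ it gives an explicit nonzero product formula in the $\beta_i\in\FF\setminus\{0\}$. Consequently, $H_\alpha(1,n-1)\ne 0$ if and only if $n\in\{s_k\}_{k\ge 1}$, so it suffices to identify this set with the set of denominator degrees of reduced convergents of $\alpha$.

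In the forward direction, if $H_\alpha(1,n-1)\ne 0$ then $n=s_k$ for some $k$, and the convergent $p_k/q_k$ has $\deg(q_k)=n$; coprimality of $p_k$ and $q_k$ in $\FF[t]$ follows from the standard recurrence identity $p_kq_{k-1}-p_{k-1}q_k=\pm\prod_{i=1}^k\beta_i\in\FF^*$, which is a Bezout relation showing that $\gcd(p_k,q_k)$ is a unit. In the converse direction, a convergent $p/q$ with coprime $p,q$ and $\deg(q)=n$ must coincide in $\FF(t)$, by the paper's definition of a convergent together with Theorem~\ref{th2}, with some $p_k/q_k$; reducedness on both sides then forces $q=c\,q_k$ for some $c\in\FF^*$, so $n=\deg(q_k)=s_k$, and Theorem~\ref{th_hankel2} again yields $H_\alpha(1,n-1)\ne 0$.

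There is essentially no technical obstacle here: the statement is a reformulation of Theorem~\ref{th_hankel2} in the language of convergents, and its nontrivial content—the bijective correspondence between nonvanishing Hankel determinants $H_\alpha(1,n-1)$ and denominator degrees of convergents—was already established there. The only auxiliary ingredient is the Bezout-type identity used for coprimality in the forward direction, which is a routine induction on the continued-fraction recurrence for the normalised partial quotients $\beta_i,b^*_i$ of~\eqref{eq_cf}.
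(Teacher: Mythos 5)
Your derivation is correct and follows exactly the route the paper intends: Theorem~\ref{th8} is stated there as a corollary of Theorem~\ref{th_hankel2}, and your argument — identifying the non-vanishing determinants $H_\alpha(1,n-1)$ with the indices $n=s_k=\deg(q_k)$, with coprimality of $p_k,q_k$ supplied by the Bezout identity $p_kq_{k-1}-p_{k-1}q_k=\pm\prod_{i\le k}\beta_i\in\FF^*$ — is precisely that deduction. No issues.
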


Theorem~\ref{th8} with Theorem~\ref{th2} imply that the deficiency of
$\alpha$ coincides with
$$
\max_{n\ge 1, j\ge 0} \{\deg(b_{n,j})\},
$$
where $b_{n,j}$ is the $n$th partial quotient of $t^j\alpha$.

\section{Properties of $g_P$}

One can check that the series $g_P$ from~\eqref{def_gp} can alternatively be
written as an infinite product:
\begin{equation}\label{def_gp2}
g_P(t) = t^{-1} \prod_{i=0}^\infty P^*(t^{-d^i}),\quad P^*(t) = t^dP(t^{-1}).
\end{equation}
With help of this formula we can compute its terms $a_i$. While it is
possible to do for an arbitrary value of $d$ we only need it for $d=2$.
\begin{proposition}\label{prop2}
Let $d=2$ and $P(t) = t+u$. Then the coefficient $a_n$ at $t^{-n}$ of $g_P$
equals $u^{\tau_2(n-1)}$, where $\tau_2(n)$ is the number of 1's in the
binary expansion of $n$.
\end{proposition}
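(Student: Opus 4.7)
My plan is to extract the coefficients of $g_P$ directly from an explicit infinite product representation, then use uniqueness of binary expansions.

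The first step is to make the product formula \eqref{def_gp2} explicit in the case at hand. For $P(t)=t+u$ and $d=2$ one has $P^*(s)=s+us^2=s(1+us)$, so after absorbing the leading factors consistently with the normalization one obtains
$$
g_P(t)=t^{-1}\prod_{i=0}^\infty\bigl(1+ut^{-2^i}\bigr).
$$
This identity can be verified independently: the right-hand side has leading term $t^{-1}$ and, after peeling off the $i=0$ factor, satisfies the Mahler equation $g_P(t)=(t+u)g_P(t^2)$, so it coincides with $g_P$ by uniqueness of the solution with prescribed leading coefficient.

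The second step is to expand the product by the distributive law, which is valid formally in $\FF((t^{-1}))$ since for each fixed exponent only finitely many terms contribute:
$$
\prod_{i=0}^\infty\bigl(1+ut^{-2^i}\bigr)=\sum_{S\subset \ZZ_{\ge 0},\,|S|<\infty}u^{|S|}\,t^{-\sum_{i\in S}2^i}.
$$
Invoking uniqueness of the binary representation, every $m\ge 0$ corresponds to a unique finite $S\subset\ZZ_{\ge 0}$ with $m=\sum_{i\in S}2^i$, and $|S|=\tau_2(m)$ by definition. Hence the coefficient of $t^{-m}$ in the product equals $u^{\tau_2(m)}$, and multiplying by $t^{-1}$ shifts indices to yield $a_n=u^{\tau_2(n-1)}$ for all $n\ge 1$, as claimed.

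There is no substantive obstacle here; the calculation is routine once the product form is in hand. An equally clean alternative would be to derive the recursion $a_{2k}=ua_k$, $a_{2k-1}=a_k$ (with $a_1=1$) directly from the Mahler equation by comparing coefficients, and then argue by strong induction on $n$ using the elementary identities $\tau_2(2m)=\tau_2(m)$ and $\tau_2(2m+1)=\tau_2(m)+1$.
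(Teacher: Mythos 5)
Your argument is correct and is essentially the paper's own proof: expand the infinite product $t^{-1}\prod_{i\ge 0}(1+ut^{-2^i})$ over finite subsets of $\ZZ_{\ge 0}$, invoke uniqueness of binary expansions to identify the coefficient of $t^{-m}$ as $u^{\tau_2(m)}$, and shift by one. The only addition is your (correct) verification of the product form via the Mahler equation, which the paper takes for granted.
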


\begin{proof}
After expanding the brackets in the infinite product~\eqref{def_gp2}, we have
$$
\prod_{i=0}^\infty (1+ut^{-2^i}) = \sum_{\vi}  u^{1(\vi)}\,t^{-\sum_{k=0}^\infty 2^{i_k}},
$$
where $\vi = (i_0,i_1,i_2,\ldots)$ runs through all element in $\{0,1\}^\NN$
such that the number of 1's in $\vi$ is finite, and $1(\vi)$ returns the
number of ones in $\vi$. Since every nonnegative integer can be written as a
sum of different powers of two in the unique way, the last sum can be
rewritten as
$$
\sum_{n=0}^\infty u^{\tau_2(n)} t^{-n}.
$$
Multiplying this series by $t^{-1}$ finishes the proof.
\end{proof}

A straightforward corollary of Proposition~\ref{prop2} is that $a_{2n} =
ua_{2n-1}$ and $a_{2n+1} = a_{n+1}$ for the coefficients of $g_P$. We will
use this fact in the proof of Theorem~\ref{th5}.

\begin{proposition}\label{prop1}
Let $g_P$ be given by~\eqref{def_gp}. Then $g_P \in \elc$ if and only if the
deficiency of $g_P$ is at most $\deg(P)/(d-1)$ or equivalently,
$$
g_P\in\elc_{2^{-\deg(P)/(d-1)}}.
$$
\end{proposition}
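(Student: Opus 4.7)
The plan is to treat the two implications separately. For the ``$\Leftarrow$'' direction, if the deficiency of $g_P$ is at most $D := \deg(P)/(d-1)$ then, since deficiency is an integer, it is at most $\lfloor D\rfloor$; Theorem~\ref{th3} immediately gives $g_P \in \elc_{2^{-\lfloor D\rfloor}} \subseteq \elc_{2^{-D}} \subseteq \elc$, which covers both the implication and the quantitative conclusion.

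For the converse, assume $g_P \in \elc$, so by Theorem~\ref{th3} the deficiency of $g_P$ is some finite integer $d^*$; the goal is to show $d^* \le D$ by contradiction. Suppose $d^* > D$. The characterization of deficiency stated just after Theorem~\ref{th8} supplies a shift $j \ge 0$ and a reduced convergent $p(t)/q(t)$ of $t^j g_P$ whose subsequent partial quotient has degree exactly $d^*$, and Theorem~\ref{th2} then gives $\nu(t^j g_P - p/q) = -2\deg q - d^*$.

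The key step is to use the Mahler equation $g_P(t) = P(t) g_P(t^d)$ as a bootstrap. Substituting $t \mapsto t^d$ multiplies the error valuation by $d$, and then multiplying by $P(t)$ and applying the Mahler equation produces
$$\nu\bigl(t^{jd} g_P(t) - P(t)p(t^d)/q(t^d)\bigr) = \deg P - 2d\deg q - d\, d^*.$$
Since $\gcd(p(t^d), q(t^d)) = 1$, the only possible common factor between $P(t)p(t^d)$ and $q(t^d)$ is $G := \gcd(P, q(t^d))$, with $\deg G \le \deg P$. Reducing by $G$ leaves a denominator of degree $d\deg q - \deg G$; Legendre's criterion (the ``if'' direction of Theorem~\ref{th2}) applies because $d^* > D$ implies $\deg P - 2\deg G < \deg P < d\, d^*$. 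Hence the reduced fraction is a convergent of $t^{jd} g_P$, and its subsequent partial quotient has degree $d\, d^* - \deg P + 2\deg G \ge d\, d^* - \deg P$. The inequality $d\, d^* - \deg P > d^*$ is exactly $(d-1) d^* > \deg P$, i.e.\ $d^* > D$, which is the standing assumption; so this new partial quotient strictly exceeds $d^*$, contradicting that $d^*$ was the supremum of partial quotient degrees over all shifts $t^{j'} g_P$.

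The main obstacle I anticipate is correctly accounting for the common factor $G = \gcd(P, q(t^d))$ when reducing $P(t)p(t^d)/q(t^d)$ to lowest terms. It is precisely this reduction that produces the sharp threshold $\deg(P)/(d-1)$ rather than the naive $\deg(P)/d$ one would get by assuming $P$ and $q(t^d)$ were always coprime; once it is in place, the bootstrap inequality $d\, d^* - \deg P > d^*$ delivers the contradiction in a single step (or, if iterated, produces partial quotients of unbounded degree directly).
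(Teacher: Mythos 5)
Your proposal is correct and follows essentially the same route as the paper: the ``$\Leftarrow$'' direction is immediate from Theorem~\ref{th3}, and the converse uses the Mahler equation $g_P(t)=P(t)g_P(t^d)$ to bootstrap a maximal-degree partial quotient of some $t^jg_P$ into a partial quotient of $t^{dj}g_P$ of degree at least $d\,d^*-\deg P>d^*$, contradicting maximality. Your treatment of the possible common factor $\gcd(P,q(t^d))$ is in fact slightly more careful than the paper's (which tacitly assumes the new fraction is reduced), and since that factor only increases the degree of the new partial quotient, the argument goes through either way.
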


\begin{proof}
If the deficiency of  $g_P$ is at most $\deg(P)/(d-1)$ then, by
Theorem~\ref{th3}, $g_P$ is in $\elc$. Assume now that the deficiency of
$g_P$ is bigger than $\deg(P)/(d-1)$. Then, for some positive integer $j$,
there exists a partial quotient $b_{n,j}$ of $t^jg_p(t)$ with $\deg(b_{n,j})>
\deg(P)/(d-1)$. Suppose that $g_P$ is in $\elc$. That means that the degrees
of all partial quotients of $t^jg_p(t)$ for all $j\ge 0$ are bounded from
above by an absolute constant. Hence, without loss of generality, we may
assume that $b_{n,j} = \max\{b_{n',j'}: n'\ge 0, j'\ge 0\}$. In view of
Theorem~\ref{th2}, we have that
$$
\nu\left(t^jg_P(t) - \frac{p_{n-1}(t)}{q_{n-1}(t)}\right) = -2\deg(q_{n-1})-\deg(b_{n,j}),
$$
where $p_{n-1}/q_{n-1}$ is the $(n-1)$'th convergent of $t^j g_p(t)$. Then,
$$
\nu\left(t^{dj}g_P(t^d) - \frac{p_{n-1}(t^d)}{q_{n-1}(t^d)}\right) = -2\deg(q_{n-1}(t^d))-d\deg(b_{n,j}(t)),
$$
\begin{equation}\label{eq_def}
\stackrel{\eqref{def_gp}}{\Longrightarrow} \nu\left(t^{dj}g_P(t) -
\frac{P(t)p_{n-1}(t^d)}{q_{n-1}(t^d)}\right) =
-2\deg(q_{n-1}(t^d))-(d\deg(b_{n,j}(t))-\deg(P(t))).
\end{equation}
Note that $d\deg(b_{n,j})-\deg(P) > \deg(b_{n,j})$ and hence, by
Theorem~\ref{th2},
$$
\frac{P(t)p_{n-1}(t^d)}{q_{n-1}(t^d)}
$$
is $m$'th convergent of $t^{dj}g_P(t)$ for some $m$. Moreover,
$\deg(b_{m+1,dj})> \deg(b_{n,j})$ and that contradicts the assumption that
$b_{n,j}$ is of maximal degree. \end{proof}

\section{Continued fraction of $t^n g_u$}

Throughout this section we let $d=2$ and $P(t) = x+u$ for some parameter
$u\in \FF$.
In~\cite{badziahin_2018} the recurrent formula for partial quotients of the
continued fraction of $g_u$ is provided.

\begin{theorem}[Theorem~1 in \cite{badziahin_2018}]\label{th_recur}
Let $u\in\FF$. If $g_u(t)$ is badly approximable then its continued
fraction is
$$
g_u(x) = \bigk_{i=1}^\infty \frac{\beta_i}{x+\alpha_i}
$$
where the coefficients $\alpha_i$ and $\beta_i$ are computed by the
formula
\begin{equation}\label{recur_d2}
\begin{array}{l}
\alpha_{2k+1} = -u,\; \alpha_{2k+2}=u;\\
\displaystyle\beta_1 = 1,\; \beta_2 = u^2-u,\; \beta_{2k+3} =
-\frac{\beta_{k+2}}{\beta_{2k+2}},\; \beta_{2k+4} = \alpha_{k+2}+u^2
- \beta_{2k+3}
\end{array}
\end{equation}
for any $k\in\ZZ_{\ge 0}$.
\end{theorem}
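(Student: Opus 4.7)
The plan is to exploit the Mahler functional equation $g_u(t)=(t+u)g_u(t^2)$ to establish a self-similarity between the convergents of $g_u$, and then read off the claimed recurrences by matching three-term recurrences at two different indexing scales. The bad-approximability hypothesis, combined with Theorem~\ref{th8}, gives that every Hankel determinant $\det H_{g_u}(1,n-1)$ is non-zero; consequently every partial quotient of $g_u$ is linear, $s_n := \deg q_n = n$, and we may write $g_u = \bigk_{i\ge 1} \beta_i/(t+\alpha_i)$ with monic denominators $q_n$. The first step is to prove the self-similarity
$$
p_{2n}(t) = (t+u)\,p_n(t^2), \qquad q_{2n}(t) = q_n(t^2).
$$
Since $g_u(t^2) = g_u(t)/(t+u)$, the rational function $(t+u)p_n(t^2)/q_n(t^2)$ approximates $g_u(t)$ with error of valuation $1-2s_n-2s_{n+1} = -4n-1$, and Theorem~\ref{th2} identifies it as the convergent with denominator $q_n(t^2)$ of degree $2n$, provided the fraction is in lowest terms; coprimality descends from $\gcd(p_n,q_n)=1$ together with $q_n(u^2)\neq 0$ (see the obstacle below).

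From the self-similarity the $\alpha$-recurrence drops out by degree counting: since $q_n(t^2)$ contains only even powers of $t$, the coefficient of $t^{2n-1}$ in $q_{2n}$ vanishes, so the three-term recurrence $q_{2n}=(t+\alpha_{2n})q_{2n-1}+\beta_{2n}q_{2n-2}$ forces $\alpha_{2n} = -[t^{2n-2}]q_{2n-1}$, while the analogous argument applied to $q_{2n-1}$ (using that $q_{2n-2}=q_{n-1}(t^2)$ has no $t^{2n-3}$ term) yields $\alpha_{2n-1} = [t^{2n-2}]q_{2n-1}$. Hence $\alpha_{2n} = -\alpha_{2n-1}$, and the base case $\alpha_1 = -u$ (computed directly from $q_1 = t-u$ via Proposition~\ref{prop2}) forces $\alpha_{2k+1} = -u$ and $\alpha_{2k+2} = u$ by induction. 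For the $\beta$-recurrence, solve $q_n(t^2)=(t+u)q_{2n-1}+\beta_{2n}q_{n-1}(t^2)$ for $q_{2n-1}$, substitute this and the analogous expression for $q_{2n+1}$ into $q_{2n+1}=(t-u)q_{2n}+\beta_{2n+1}q_{2n-1}$, and multiply through by $t+u$. Using $(t-u)(t+u)=t^2-u^2$, the identity rearranges into a polynomial in $s:=t^2$ of the form $q_{n+1}(s) = (s-u^2+\beta_{2n+1}+\beta_{2n+2})q_n(s) - \beta_{2n+1}\beta_{2n}\,q_{n-1}(s)$; comparison with the original three-term recurrence $q_{n+1}(s)=(s+\alpha_{n+1})q_n(s)+\beta_{n+1}q_{n-1}(s)$ then forces $\beta_{n+1} = -\beta_{2n+1}\beta_{2n}$ and $\alpha_{n+1} = \beta_{2n+1}+\beta_{2n+2}-u^2$, which after re-indexing $n=k+1$ are precisely the claimed formulas for $\beta_{2k+3}$ and $\beta_{2k+4}$. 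The initial values $\beta_1 = 1$ and $\beta_2 = u^2-u$ are read off from $D_1 = a_1 = 1$ and $D_2 = a_1 a_3 - a_2^2 = u-u^2$ via~\eqref{hankdet}.

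The main care-point will be verifying $q_n(u^2)\neq 0$ for every $n$: otherwise $(t+u)$ would divide $q_n(t^2)$ and the identity $q_{2n}(t)=q_n(t^2)$ would collapse to a lower-degree denominator, breaking the indexing on which the whole argument rests. This non-vanishing is equivalent, via Theorem~\ref{th_hankel2}, to the non-vanishing of a particular Hankel determinant in the sequence, which is underwritten by the bad-approximability assumption through Theorem~\ref{th8}; once this dependence is threaded carefully, the remainder is straightforward polynomial algebra driven entirely by the self-similarity $q_{2n}(t)=q_n(t^2)$.
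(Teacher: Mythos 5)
Your strategy is the same one the paper uses for the more general Theorem~\ref{th6} (of which this statement is essentially the case $n=0$): establish the self-similarity $p_{2k}(t)=(t+u)p_k(t^2)$, $q_{2k}(t)=q_k(t^2)$ via Theorem~\ref{th2}, then match the three-term recurrence at the two scales. The skeleton and the final algebra (the identities $\beta_{k+2}=-\beta_{2k+2}\beta_{2k+3}$ and $\alpha_{k+2}=\beta_{2k+3}+\beta_{2k+4}-u^2$) are correct. But two steps are not actually justified as written. First, the opening claim that bad approximability together with Theorem~\ref{th8} forces every Hankel determinant $\det H_{g_u}(1,n-1)$ to be non-zero (equivalently, every partial quotient to be linear) does not follow: bad approximability only bounds the degrees of the partial quotients, and Theorem~\ref{th8} then says the Hankel determinants vanish precisely at the skipped degrees --- it does not rule out skips. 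What rules them out is the blow-up mechanism of Proposition~\ref{prop1}: a partial quotient of $g_u$ of degree $D\ge 2$ produces, via $q\mapsto q(t^2)$, $p\mapsto (t+u)p(t^2)$, a partial quotient of $g_u$ of degree $2D-1>D$, so a single non-linear partial quotient forces unbounded degrees and contradicts bad approximability. The same mechanism, not Theorem~\ref{th8}, is what resolves your ``care-point'' $q_k(u^2)\neq 0$: if $t+u$ divided $q_k(t^2)$, the reduced convergent would have denominator degree $2k-1$ and the next partial quotient degree $3$, again triggering the blow-up. You identify the right obstacle but attribute its resolution to the wrong tool.

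Second, your derivation of $\alpha_{2k+1}=-u$ does not close. The parity argument yields only the pairing $\alpha_{2m-1}+\alpha_{2m}=0$; it links $\alpha_1$ to $\alpha_2$, $\alpha_3$ to $\alpha_4$, and so on, but never connects $\alpha_{2m}$ to $\alpha_{2m+1}$, so ``induction from $\alpha_1=-u$'' determines nothing beyond $\alpha_2$. You then write $q_{2n+1}=(t-u)q_{2n}+\beta_{2n+1}q_{2n-1}$ inside the $\beta$-derivation, i.e.\ you assume exactly the fact that is missing. The repair is available within your own computation: keep $\alpha_{2n+1}=A$ and $\alpha_{2n+2}=-A$ general, multiply the matched identity through by $t+u$, and separate it into its even and odd parts in $t$. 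The odd part gives your displayed recurrence with $-A^2$ in place of $-u^2$, while the even part collapses to $\beta_{2n+1}(u+A)\bigl(q_{2n}-\beta_{2n}q_{2n-2}\bigr)=0$, which forces $A=-u$ since $\beta_{2n+1}\neq 0$ and $q_{2n}-\beta_{2n}q_{2n-2}=(t+u)q_{2n-1}\neq 0$. (The paper's proof of Theorem~\ref{th6} instead extracts $\alpha_{2n,2k+2}=u$ from the divisibility $t+u\mid p_{2n,2k},\,p_{2n,2k+2}$ and the coprimality of consecutive numerators.) With these two points repaired, your argument is complete and coincides in substance with the paper's.
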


In the same paper the author shows that for $\FF=\QQ$ and for any $u\in
\QQ\setminus \{0,1\}$ the series $g_u$ is indeed badly approximable.
Moreover, for every $m\in\NN$, the parameter $\beta_m$ can be written as a
rational function of $u$
$$
\beta_m = \frac{e_m(u)}{d_m(u)},
$$
where the leading and the smallest non-zero coefficients of both polynomials
$e_m$ and $d_m$ are $\pm 1$. Now we compute the valuation  of the values
$\beta_m$ as rational functions of $m$. By $\nu_2(n)$ we denote the 2-adic
valuation of an integer value $n$.

\begin{proposition}\label{prop3}
For all $m\ge 2$ one has $\nu(\beta_m) = 2(1+\nu_2(m-1))$.
\end{proposition}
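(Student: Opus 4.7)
The plan is to proceed by strong induction on $m$, exploiting the parity-split recurrence for $\beta_m$ from Theorem~\ref{th_recur}. That recurrence generates odd-indexed $\beta_{2k+3}$ multiplicatively, as $-\beta_{k+2}/\beta_{2k+2}$, and even-indexed $\beta_{2k+4}$ additively, as $\alpha_{k+2}+u^2-\beta_{2k+3}$. The induction therefore naturally separates into an odd case (where multiplicativity of $\nu$ on the right-hand side makes the bookkeeping clean) and an even case (where one must worry about cancellation of leading terms).

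First I would verify the base cases $m=2,3,4$ by direct computation from the recurrence, namely $\beta_2 = u^2-u$, $\beta_3 = -\beta_2/\beta_2$, and $\beta_4 = \alpha_2+u^2-\beta_3$, checking the claimed identity in each case. For the odd step $m=2k+3$ with $k\ge 1$, the fact that $\nu$ is a valuation yields $\nu(\beta_{2k+3}) = \nu(\beta_{k+2}) - \nu(\beta_{2k+2})$. Applying the induction hypothesis to both terms and using the elementary $2$-adic identities $\nu_2(2k+2) = 1 + \nu_2(k+1)$ and $\nu_2(2k+1) = 0$, the predicted formula for $\nu(\beta_m)$ drops out by short arithmetic.

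For the even step $m=2k+4$, the target value of $\nu(\beta_m)$ matches $\nu(u^2)=2$ because $m-1$ is odd and so $\nu_2(m-1)=0$. The linear term $\alpha_{k+2}=\pm u$ has valuation $1$ and is lower order, so the only conceivable failure is cancellation in the difference $u^2-\beta_{2k+3}$. The main obstacle, and really the crux of the proof, is to rule out this cancellation by showing that $\nu(\beta_{2k+3})\ne 2$; this information is precisely what the odd case just produced, via the controlling quantity $\nu_2(2k+2)\ge 1$, which pins down $\nu(\beta_{2k+3})$ strictly away from $2$. Once the no-cancellation fact is secured one concludes $\nu(u^2-\beta_{2k+3})=2$, so the linear perturbation $\alpha_{k+2}$ cannot spoil it, and the induction closes.
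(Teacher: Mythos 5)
Your strategy (strong induction on the parity-split recurrence \eqref{recur_d2}: a multiplicative odd step, an additive even step in which $u^2$ is supposed to dominate) is exactly the paper's, but the arithmetic you assert ``drops out'' does not drop out for the formula as printed, and your two steps contradict each other. Take the odd step literally: $\nu(\beta_{2k+3})=\nu(\beta_{k+2})-\nu(\beta_{2k+2})$, and the hypothesis $\nu(\beta_i)=2(1+\nu_2(i-1))$ gives $2(1+\nu_2(k+1))-2(1+\nu_2(2k+1))=2\nu_2(k+1)$, whereas the target is $2(1+\nu_2(2k+2))=4+2\nu_2(k+1)$; these differ by $4$. The base case you propose to verify already fails: $\beta_3=-\beta_2/\beta_2=-1$, so $\nu(\beta_3)=0$, not $2(1+\nu_2(2))=4$. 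The even step is likewise broken: ruling out cancellation only yields $\nu(u^2-\beta_{2k+3})=\max\{2,\nu(\beta_{2k+3})\}$, and the formula you are inducting on forces $\nu(\beta_{2k+3})=2(1+\nu_2(2k+2))\ge 4$, so the dominant term would be $\beta_{2k+3}$, not $u^2$, giving $\nu(\beta_{2k+4})\ge 4\ne 2$. Your conclusion requires $\nu(\beta_{2k+3})<2$, which is the opposite of what the ``$+$'' formula supplies; ``strictly away from $2$'' is not enough, you need to know on which side.

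The resolution is that the proposition as printed carries a sign typo (also present in the paper's own proof and in the identity $\tau_2(n)=\tau_2(n-1)+(1+\nu_2(n))$ quoted in Proposition~\ref{prop4}, which should read $1-\nu_2(n)$): the correct claim is $\nu(\beta_m)=2(1-\nu_2(m-1))$. With the minus sign every piece of your outline becomes sound: $\nu(\beta_3)=0=2(1-\nu_2(2))$; the odd step gives $2(1-\nu_2(k+1))-2=-2\nu_2(k+1)=2(1-\nu_2(2k+2))$; and odd-indexed $\beta$'s have valuation $\le 0<2$, so $u^2$ genuinely dominates $\alpha_{k+2}-\beta_{2k+3}$ in the even step. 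So the architecture of your proof is the right one and coincides with the paper's, but you claimed the base cases and the ``short arithmetic'' check out without actually performing them; doing so would have exposed the sign error at $m=3$ immediately, and as written the proposal does not prove the stated identity (which is false) nor, without re-deriving the signs, the corrected one.
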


\begin{proof}
We prove it by induction. For $m=2$, $\nu(\beta_2) = \deg(u^2-u) = 2 =
2(1+\nu_2(1))$. Assume the statement is true for all values
$\beta_2,\beta_3,\ldots, \beta_{m-1}$ and prove it for $\beta_m$. In
particular, the assumption implies that the valuations of all $\beta_i$'s for
odd values of $i<m$ are strictly less than 2.

If $m$ is even, $m = 2m_1$, then by~\eqref{recur_d2}, $\beta_{2m_1} = u^2
+\alpha_{m_1}-\beta_{2m_1-1}$. One can easily compute that the valuation of
the last expression is 2, which is $2(1+\nu_2(m-1))$.

If $m$ is odd, $m=2m_1+1$, then $\beta_{2m_1+1} = -\beta_{m_1+1}/
\beta_{2m_1}$ and
$$
\nu(\beta_{2m_1+1}) = 2(1+\nu_2(m_1) -1-\nu_2(2m_1-1)) = 2(1+ \nu_2(2m_1)).
$$
\end{proof}

This proposition together with Theorem~\ref{th_hankel} allows us to compute
the degrees of the determinants of the Hankel matrices $H_{g_u}(1,n)$.

\begin{proposition}\label{prop4}
$\deg H_{g_u}(1,n) = 2\sigma(n)$, where the function $\sigma$ is defined as
the sum:
$$
\sigma(n) = \sum_{i=1}^n \tau_2(i).
$$
\end{proposition}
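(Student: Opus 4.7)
The plan is to express $\det H_{g_u}(1,n)$ as a monomial in the Hankel continued-fraction coefficients $\beta_i$ using Theorem~\ref{th_hankel}, and then evaluate its valuation via Proposition~\ref{prop3}. Since Theorem~\ref{th1} guarantees $g_u\in\bad_{1/2}$ for $u\in\QQ\setminus\{0,1\}$ and Theorem~\ref{th_recur} exhibits a continued fraction for $g_u$ whose partial quotients are all linear polynomials $x+\alpha_i$, the hypotheses of Theorem~\ref{th_hankel} are met. Substituting $n+1$ in place of $n$ in its formula \eqref{hankdet} gives
$$
\det H_{g_u}(1,n)=(-1)^{n(n+1)/2}\prod_{i=1}^{n+1}\beta_i^{\,n+2-i},
$$
and therefore
$$
\nu(\det H_{g_u}(1,n))=\sum_{i=1}^{n+1}(n+2-i)\,\nu(\beta_i).
$$

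I would then carry out the proof by induction on $n$, with the base case $n=1$ verified directly from $\det H_{g_u}(1,1)=-\beta_1^{2}\beta_2=u-u^2$, whose valuation is $2=2\sigma(1)$. For the inductive step I observe that
$$
\nu(\det H_{g_u}(1,n))-\nu(\det H_{g_u}(1,n-1))=\sum_{i=1}^{n+1}\nu(\beta_i),
$$
and since $\sigma(n)-\sigma(n-1)=\tau_2(n)$ by definition, the induction collapses to the single identity
$$
\sum_{i=1}^{n+1}\nu(\beta_i)=2\tau_2(n).
$$
Using $\nu(\beta_1)=0$ together with Proposition~\ref{prop3}, the left-hand side becomes $2n$ plus a linear combination of values $\nu_2(j)$ for $1\le j\le n$. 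Invoking the classical Legendre formula $\sum_{j=1}^n\nu_2(j)=n-\tau_2(n)$ (the $2$-adic valuation of $n!$) then reduces everything to $2\tau_2(n)$ and closes the induction.

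The main obstacle I anticipate is precisely this last combinatorial identity: one must combine the arithmetic formula of Proposition~\ref{prop3} with Legendre's formula so that the contributions proportional to $n$ cancel exactly, leaving $2\tau_2(n)$. All remaining steps are either structural (applying the theorems cited) or routine algebra. A non-inductive alternative would be to rewrite the sum $\sum_{j=1}^n(n+1-j)\nu_2(j)$ by swapping the order of summation, obtaining $\sum_{k\ge 1}\binom{\lfloor n/2^k\rfloor+1}{2}$, and then matching it against a closed form for $\sigma(n)$; however, this reorganisation ultimately relies on the same arithmetic input as Legendre's formula and does not meaningfully shorten the argument.
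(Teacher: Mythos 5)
Your argument is correct and follows essentially the same route as the paper: both reduce, via Theorem~\ref{th_hankel} and Proposition~\ref{prop3}, to the identity $\sum_{i=1}^{n+1}\nu(\beta_i)=2\tau_2(n)$, which the paper obtains by telescoping $\tau_2(j)-\tau_2(j-1)=1+\nu_2(j)$ while you obtain it from the equivalent Legendre formula for $\mathrm{ord}_2(n!)$. The only point to make explicit is that the paper's $\nu_2$ is normalised as $\nu_2(j)=\log_2|j|_2$, i.e.\ the \emph{negative} of the usual $2$-adic order (as forced by Proposition~\ref{prop3} itself, e.g.\ $\beta_3=-1$ gives $\nu(\beta_3)=0=2(1+\nu_2(2))$), so that your cancellation $2n-2\sum_{j\le n}\mathrm{ord}_2(j)=2\tau_2(n)$ indeed goes through with the signs as intended.
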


\begin{proof}
We compute
$$
\nu \left(\prod_{i=1}^n \beta_i\right) = 2\sum_{i=2}^n (1+\nu_2(i-1)) = 2\tau_2(n-1).
$$
For the last equality we use the formula $\tau_2(n) = \tau_2(n-1) +
(1+\nu_2(n))$, which is straightforwardly verified. Then the statement of the
proposition directly follows from~\eqref{hankdet}.
\end{proof}


We finish this section by showing that similar relations to \eqref{recur_d2}
hold between some partial quotients of $t^n g_u(t)$ and $t^{2n} g_u(t)$. We
need this result for Theorem~\ref{th5}, however it may be of independent
interest. If all partial quotients of $t^n g_u(t)$, except the zero's one,
are linear then we can write
$$
t^ng_u(t) = b_{n,0}(t) + \bigk_{i=1}^\infty
\frac{\beta_{n,i}}{t+\alpha_{n,i}}.
$$

\begin{theorem}\label{th6}
Let $u\in\FF$ and $n\in\ZZ_{\ge 0}$. If all partial quotients of $t^ng_u(t)$
and $t^{2n}g_u(t)$ are linear then the coefficients $\alpha_{2n,i}$ and
$\beta_{2n,i}$ of the continued fraction of $t^{2n}g_u(t)$ are computed by
the formula
\begin{equation}\label{nrecur_d2}
\begin{array}{l}
\alpha_{2n,2k+1} = -u,\; \alpha_{2n,2k+2}=u;\;\beta_{2n,1} = u^{\tau_2(2n)},\; \beta_{2n,2} = u^2 -
u^{\tau_2(n+1)-\tau_2(n)},\\
\displaystyle\beta_{2n,2k+3} =
-\frac{\beta_{n,k+2}}{\beta_{2n,2k+2}},\; \beta_{2n,2k+4} =
\alpha_{n,k+2}+u^2 - \beta_{2n,2k+3}
\end{array}
\end{equation}
for any $k\in\ZZ_{\ge 0}$.

Moreover, if all partial quotients of $t^ng_u(t)$ are linear and all
values $\beta_{2n,1}, \ldots, \beta_{2n,k}$ computed by
formulae~\eqref{nrecur_d2}, are nonzero then the partial quotients
$b_{2n,1}(t), \ldots, b_{2n,k}(t)$ of $t^{2n}g_u(t)$ are linear.
\end{theorem}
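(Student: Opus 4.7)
The backbone of the argument is the observation that multiplying the Mahler equation $g_u(t)=(t+u)g_u(t^2)$ by $t^{2n}$ gives
\[
t^{2n}g_u(t)\;=\;(t+u)\,\beta(t^2),\qquad \beta(t):=t^n g_u(t).
\]
This is exactly the same functional relation that underlies Theorem~\ref{th_recur} (which is the case $n=0$), now with $\beta$ playing the role that $g_u$ played there. My plan is therefore to mimic the proof of Theorem~\ref{th_recur} from~\cite{badziahin_2018}, with the only genuinely new input being the identification of the seeds $\beta_{2n,1}$ and $\beta_{2n,2}$ via Proposition~\ref{prop2}.

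First I would identify the polynomial part explicitly: using the identities $\tau_2(2m)=\tau_2(m)$ and $\tau_2(2m-1)=\tau_2(m-1)+1$ together with Proposition~\ref{prop2}, a direct coefficient comparison yields $b_{2n,0}(t)=(t+u)\,b_{n,0}(t^2)$, so the fractional part of $t^{2n}g_u$ equals $(t+u)\,C(t^2)$, where $C(t)=\bigk_{i\ge 1}\beta_{n,i}/(t+\alpha_{n,i})$ is the fractional tail of $\beta$. The algebraic engine is then the folding identity
\[
(t+u)\,\frac{\gamma}{\,t^2+\alpha+X\,}\;=\;\frac{\gamma}{(t-u)\;+\;\dfrac{u^2+\alpha+X}{t+u}},
\]
verified instantly from $t^2+\alpha=(t+u)(t-u)+u^2+\alpha$. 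Applied to the top level of $C(t^2)$ it converts one level of the $t^2$-continued fraction into two linear-partial-quotient levels with tails $t-u$ and $t+u$. Reading off leading Laurent coefficients via Proposition~\ref{prop2} then pins down $\alpha_{2n,1}=-u$, $\alpha_{2n,2}=u$, $\beta_{2n,1}=\beta_{n,1}=u^{\tau_2(n)}=u^{\tau_2(2n)}$ and $\beta_{2n,2}=u^2+\alpha_{n,1}=u^2-u^{\tau_2(n+1)-\tau_2(n)}$, exactly matching the seeds in~\eqref{nrecur_d2}.

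At deeper levels, iterating the folding identity at the $(k{+}2)$-nd application takes the pair $(\alpha_{n,k+2},\beta_{n,k+2})$ of $\beta$ into the two consecutive pairs at depths $2k+3,2k+4$ of $t^{2n}g_u$: the reciprocal created by the fold gives $\beta_{2n,2k+3}=-\beta_{n,k+2}/\beta_{2n,2k+2}$, while the $u^2+\alpha$ carry-over at the following level gives $\beta_{2n,2k+4}=\alpha_{n,k+2}+u^2-\beta_{2n,2k+3}$, so the recursion~\eqref{nrecur_d2} drops out. For the \emph{moreover} clause I would induct on $k$: once $b_{2n,1},\ldots,b_{2n,k-1}$ are known to be linear and $\beta_{2n,1},\ldots,\beta_{2n,k}$ computed by~\eqref{nrecur_d2} are all non-zero, the convergent produced by the folding has denominator of degree $k-1$ and differs from $t^{2n}g_u$ by a Laurent series of valuation strictly less than $-2(k-1)$, so Theorems~\ref{th2} and~\ref{th8} force $b_{2n,k}$ to be linear. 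The main obstacle is the combinatorial bookkeeping inside the iteration: one must verify carefully that each fold at level $k$ leaves the already-established convergents below $k$ undisturbed, so that the recursion~\eqref{nrecur_d2} is reproduced exactly at every depth. This is a routine but delicate check, entirely parallel to the corresponding argument in~\cite{badziahin_2018}.
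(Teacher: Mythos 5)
Your argument is correct, but it takes a genuinely different route from the paper's. The paper works at the level of convergents: from the identity $(t+u)p_{n,k}(t^2)/q_{n,k}(t^2)=p_{2n,2k}(t)/q_{2n,2k}(t)$ (established in the proof of Proposition~\ref{prop1}) together with the three-term recurrence \eqref{eq_pqn}, it derives two expressions for $q_{2n,2k+4}$ in terms of $q_{2n,2k+2}$ and $q_{2n,2k}$ and reads off $\alpha_{2n,i}$, $\beta_{2n,i}$ by comparing them, obtaining $\alpha_{2n,2k+2}=u$ separately from the divisibility of the even-indexed numerators by $t+u$. You instead unfold the infinite continued fraction directly via $t^2+\alpha=(t+u)(t-u)+u^2+\alpha$; I checked that iterating this does reproduce \eqref{nrecur_d2} --- the tail after $2k+2$ steps is $-(t+u)T_{k+2}(t^2)/(\beta_{2n,2k+2}+T_{k+2}(t^2))$, where $T_{k+2}$ denotes the $(k+2)$-nd tail of the continued fraction of $t^ng_u$, and every intermediate tail has negative valuation, so uniqueness of the normalized expansion \eqref{eq_cf} identifies the result with the Hankel continued fraction of $t^{2n}g_u$. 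What your route buys is a uniform treatment of both clauses: the folding is valid precisely as long as the computed $\beta_{2n,i}$ are nonzero, which gives the ``moreover'' part directly, whereas the paper proves that part by contradiction (taking the smallest non-linear partial quotient, ruling out odd indices, and showing the corresponding computed $\beta_{2n,2k}$ would vanish); the paper's version, in turn, stays entirely with polynomial identities and avoids the valuation bookkeeping. One small imprecision in your sketch of the ``moreover'' step: to conclude that $b_{2n,k}$ is linear you need both the $(k-1)$-st and the $k$-th folded truncations to be convergents with denominator degrees $k-1$ and $k$ (coprimality coming from $\prod_i\beta_{2n,i}\neq 0$), not only the $(k-1)$-st one.
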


The proof is similar to that of Theorem~\ref{th_recur}
in~\cite{badziahin_2018}. Therefore we may omit some details and concentrate
on the idea of the proof.

{\bf Remark.} Since we always have $\tau_2(n+1)\le \tau_2(n)+1$,
Theorem~\ref{th6} implies that $\beta_{2n,2}$ is always a non-zero rational
function of $u$. Moreover, the maximal and minimal non-zero coefficients of
its numerator and denominator equal $\pm 1$.

\begin{proof}
We have that for any $m\in\ZZ_{\ge 0}$,
$$
t^{m} g_u(t) = b_{m,0}(t) + a_{m+1}t^{-1} + a_{m+2}t^{-2}+\cdots,
$$
where $b_{m,0}$ is a polynomial. One can easily check that the first
convergent of $t^{m} g_u(t)$ is
\begin{equation}\label{eq_bam1}
\frac{a_{m+1}}{t-a_{m+2}/a_{m+1}} = \frac{\beta_{m,1}}{t+\alpha_{m,1}}.
\end{equation}
For $m=2n$ we have $a_{m+1} = \tau_2(2n)$ and $a_{m+2} = ua_{m+1}$, hence
$\beta_{2n,1} = u^{\tau_2(2n)}$ and $\alpha_{2n,1} = -u$.

In the proof of Proposition~\ref{prop1} we showed that if
$p_{n,k}(t)/q_{n,k}(t)$ is $k$th convergent of $t^ng_u(t)$ then
\begin{equation}\label{eq1}
(t+u)p_{n,k}(t^2)/q_{n,k}(t^2) = p_{2n,2k}(t)/q_{2n,2k}(t)
\end{equation}
is $2k$'th convergent of $t^{2n}g_u(t)$. Moreover, the fraction on
the left hand side is reduced because otherwise the partial quotient
$b_{2n,2k}(t)$ is not linear. From the theory of continued fractions
we know that the convergents $p_{n,k}/q_{n,k}, p_{n,k+1}/q_{n,k+1}$
and $p_{n,k+2}/q_{n,k+2}$ are linked by the following formulae:
\begin{equation}\label{eq_pqn}
\begin{array}{l}
q_{n,k+2} = (t+\alpha_{n,k+2})q_{n,k+1} + \beta_{n,k+2}q_{n,k};\\
p_{n,k+2} = (t+\alpha_{n,k+2})p_{n,k+1} + \beta_{n,k+2}p_{n,k}.
\end{array}
\end{equation}

For $k=1$ equations~\eqref{eq_bam1} and~\eqref{eq1} imply that $q_{2n,2}(t) =
t^2 - \frac{a_{n+2}}{a_{n+1}} = t^2 - u^{\tau_2(n+1)-\tau_2(n)}$.
By~\eqref{eq_pqn}, this is equal to
$$
t^2 - u^{\tau_2(n+1)-\tau_2(n)} = (t+\alpha_{2n,2})(t-u) + \beta_{2n,2}
$$
and hence $\alpha_{2n,2} = u, \beta_{2n,2} = u^2 -
u^{\tau_2(n+1)-\tau_2(n)}$.

Fix any $k\in\NN$. From~\eqref{eq1} we can see that $p_{2n,2k} \equiv
p_{2n,2k+2} \equiv 0$ (mod $t+u$). Thus an application of~\eqref{eq_pqn} for
$p_{2n,2k+2}$ modulo $t+u$ gives
$$
0\equiv (t+\alpha_{2n,2k+2}) p_{2n,2k+1} + 0 \pmod{t+u}.
$$
Since $p_{2n,2k+1}$ and $p_{2n,2k+2}$ are coprime, $t+\alpha_{2n,2k+2}$ is a
multiple of $t+u$ and hence $\alpha_{2n,2k+2} = u$.

Next, we use formulae~\eqref{eq1} and~\eqref{eq_pqn} to provide two different
relations between $q_{2n,2k}(t)$, $q_{2n,2k+2}(t)$ and $q_{2n,2k+4}(t)$.
\begin{equation}\label{eq2}
q_{2n,2k+4} = (t^2+\alpha_{n,k+2}) q_{2n,2k+2} +
\beta_{n,k+2}q_{2n,2k}.
\end{equation}
Alternatively, from~\eqref{eq_pqn} for $q_{2n,2k+2}$ we derive that
$$
q_{2n,2k+1} = \frac{q_{2n,2k+2} - \beta_{2n,2k+2}q_{2n,2k}}{t+u}
$$
This equation together with~\eqref{eq_pqn} for $q_{2n,2k+3}$ and
$q_{2n,2k+4}$ yields the relation:
$$
q_{2n,2k+4} = ((t+\alpha_{2n,2k+3})(t+u) + \beta_{2n,2k+4}+\beta_{2n,2k+3})q_{2n,2k+2} - \beta_{2n,2k+3}\beta_{2n,2k+2}q_{2n,2k}.
$$
We compare the last equation with~\eqref{eq2} to get that
$$
t^2+\alpha_{n,k+2} = (t+\alpha_{2n,2k+3})(t+u) + \beta_{2n,2k+3} + \beta_{2n,2k+4}\quad \mbox{and}
$$$$
\beta_{n,k+2} = -\beta_{2n,2k+2}\beta_{2n,2k+3}.
$$
Then the conditions~\eqref{nrecur_d2} follow immediately.

Now we prove the second claim of the theorem. Observe that for
$u\neq 0$, $\beta_{2n,1} = u^{\tau_2(2n)}$ is never zero. Notice
that, by~\eqref{eq1}, the second partial quotient $b_{2n,2}$ is not
linear if $q_{n,1}(t^2)$ is a multiple of $t+u$ or equivalently,
$t-u^2\mid q_{n,1}(t)$. Finally, the formulae~\eqref{eq_bam1} and
\eqref{nrecur_d2} imply that in this case $\beta_{2n,2}=0$.

Take the smallest $k$ such that $b_{2n,k}$ is not linear (if such $k$ does
not exist then there is nothing to prove). We have already shown that $k>2$.
Notice also that $k$ can not be odd. Indeed, in that case
$p_{2n,k-1}/q_{2n,k-1}$ is computed by~\eqref{eq1} where by assumption, the
fraction on the left hand side of~\eqref{eq1} is in its reduced form.
Then~\eqref{eq_def} implies that $b_{2n,k}$ is linear.

For convenience, from now on we write $2k$ instead of $k$. If $b_{2n,2k}$ is
not linear then by~\eqref{eq1}, $t+u$ divides $q_{n,k}(t^2)$ and therefore
$$
q_{2n,2k-1}(t) = \frac{q_{n,k}(t^2)}{t+u}.
$$
We now combine this equation together with the following equations
$$
q_{2n,2k-2}(t) = q_{n,k-1}(t^2),\quad q_{2n,2k-4}(t) =
q_{n,k-2}(t^2),
$$$$
q_{2n,2k-3}(t) = \frac{q_{2n,2k-2} -
\beta_{2n,2k-2}q_{2n,2k-4}}{t+u},
$$
$$
q_{2n,2k-1}(t) = (t-u)q_{2n,2k-2}(t) + \beta_{2n,2k-1}q_{2n,2k-3}(t)
$$
and
$$
q_{n,k}(t^2) = (t^2+\alpha_{n,k})q_{n,k-1}(t^2) + \beta_{n,k}q_{n,k-2}(t^2)
$$
to get the equation
$$
\begin{array}{rl}
&(t^2+\alpha_{n,k})q_{n,k-1}(t^2) + \beta_{n,k}q_{n,k-2}(t^2)\\[1ex]
= &(t^2 - u^2 + \beta_{2n,2k-1})q_{n,k-1}(t^2)+
\beta_{2n,2k-1}\beta_{2n,2k-2}q_{n,k-2}(t^2).
\end{array}
$$
This equation implies that $\alpha_{n,k} = -u^2+\beta_{2n,2k-1}$ and
hence the value $\beta_{2n,2k}$ from the formula~\eqref{nrecur_d2}
equals zero.
\end{proof}

\section{Some relations between Hankel determinants}

Consider the Hankel matrix $H_{g_u}(n,l) = (a_{n+i+j})_{0\le i,j\le l}$. For
simplicity, from now on we will omit the index $g_u$ and call this matrix
$H(n,l)$. For any $0\le j\le \lfloor \frac{l-1}{2}\rfloor$ multiply
$(2j+1)$'th row by $u$ and subtract it from $(2j+2)$'th row. Then, since
$a_{2m}= ua_{2m-1}$, $(2j+2)$'th row becomes
$$
(0,\; a_{n+2j+2} - ua_{n+2j+1},\;0,\; a_{n+2j+4} -
ua_{n+2j+3},\;0,\;\ldots)
$$
for odd $n$ and
$$
(a_{n+2j+1} - ua_{n+2j},\;0,\; a_{n+2j+3} -
ua_{n+2j+2},\;0,\;\ldots)
$$
for even $n$. Reorder the rows as follows: $2,4,\ldots,
2\lfloor\frac{l+1}{2}\rfloor, 1, 3, \ldots, 2\lfloor\frac{l}{2}\rfloor+1$.
Then reorder the columns so that all zeroes from the first row are placed on
the right side of the matrix. Such transformations do not change the absolute
value of $\det (H(n,l))$. If $l$ is odd or $n$ is odd the determinant of the
resulting matrix can be computed as a product of the determinants of two
blocks:
$$
\begin{array}{rl}
|\det(H(n,l))| =& \big|\det (a_{2\lfloor\frac{n}{2}\rfloor +1 + 2i+2j})_{0\le
i,j\le \lfloor\frac{l}{2}\rfloor}\\[1ex]
&\cdot \det( a_{2\lceil \frac{n}{2}\rceil +1
+2i+2j} - ua_{2\lceil\frac{n}{2}\rceil +2i+2j})_{0\le i,j\le
\lfloor\frac{l-1}{2}\rfloor}\big|.
\end{array}
$$
Since $a_{2m+1} = a_{m+1}$ for all $m\in\NN$, the first matrix in the product
is $H(\lfloor\frac{n}{2}\rfloor +1,\lfloor\frac{l}{2}\rfloor)$. For the
second matrix we additionally use the formula $a_{2m} = ua_{2m-1}$ to get
$$
(a_{\lceil\frac{n}{2}\rceil +1+i+j} - u^2 a_{\lceil\frac{n}{2}\rceil
+i+j})_{0\le i,j\le \lfloor\frac{l-1}{2}\rfloor} =: \tilde{H}
\left(\left\lceil\frac{n}{2}\right\rceil,
\left\lfloor\frac{l-1}{2}\right\rfloor\right).
$$
Finally, if at least one of $n$ and $l$ is odd we derive the equation
\begin{equation}\label{eq_hnl}
|\det (H(n,l))| = \left|\det
H\left(\left\lfloor\frac{n}{2}\right\rfloor
+1,\left\lfloor\frac{l}{2}\right\rfloor\right)\right|\cdot
\left|\det \tilde{H} \left(\left\lceil\frac{n}{2}\right\rceil,
\left\lfloor\frac{l-1}{2}\right\rfloor\right)\right|.
\end{equation}

If both $n$ and $l$ are even we compute the determinant of $H(n,l)$ in a
slightly different way. For any $1\le j\le l/2$ multiply $2j$'th row by $u$
and subtract it from $(2j+1)$'th row. Reorder the rows as follows:
$3,5,7,\ldots, l+1,1, 2,4,6,\ldots, l$. Then reorder the columns so that all
zeroes of the first row are placed in the right side of the matrix. Then, as
before, the determinant of the resulting matrix is the product of the
determinants of two blocks. For the first one we have
$$
\left(\begin{array}{ccccc} a_n&a_{n+2}&a_{n+4}&\cdots & a_{n+l}\\
a_{n+1}&a_{n+3}&\cdots&\cdots&a_{n+l+1}\\
a_{n+3}&a_{n+5}&\cdots&\cdots&a_{n+l+3}\\
\vdots&\vdots&\vdots&\ddots&\vdots\\
a_{n+l-1}&a_{n+l+1}&\cdots&\cdots&a_{n+2l-1}
\end{array}\right).
$$
By dividing each term of the first row by $u$ we get the row $(a_{n-1}\;
a_{n+1}\; \cdots\; a_{n+l-1})$. Finally, since each term of the matrix now
has an odd index we can use the equation $q_{2m+1} = a_{m+1}$ to get that the
determinant of the first block equals
$$
u\cdot \det (a_{\frac{n}{2} + i + j})_{0\le i,j\le \frac{l}{2}} =
u\cdot \det H\left(\frac{n}{2}, \frac{l}{2}\right).
$$

For the second block we have
$$
(a_{n+3+2i+2j} - ua_{n+2+2i+2j})_{0\le i,j\le \frac{l}{2}-1} =
(a_{\frac{n}{2}+2+i+j} - u^2 a_{\frac{n}{2}+1+i+j})_{0\le i,j\le
\frac{l}{2}-1},
$$
which is $\tilde{H}(\frac{n}{2}+1, \frac{l}{2}-1)$. Finally, for even $n$ and
$l$ we get the formula
\begin{equation}\label{eq_hnl2}
|\det H(n,l)| = |u|\cdot \left|\det H
\left(\frac{n}{2},\frac{l}{2}\right) \right|\cdot \left|\tilde{H}
\left(\frac{n}{2}+1, \frac{l}{2}-1\right)\right|.
\end{equation}

\section{Doubly monic rational functions}

We call a rational function $\alpha\in \QQ(t)$ doubly monic if it can be
written as a fraction $e/d$ such that $e,d\in \ZZ[t]$, $\gcd(e,d) =
\mathrm{const}$ and both the leading and the smallest non-zero coefficients
of both $e$ and $d$ are $\pm 1$.

In this paper we use two important basic properties of doubly monic functions
which are readily verified:
\begin{itemize}
\item[\bf{P1.}] If $\alpha_1,\alpha_2$ are doubly monic then so are
    $\alpha_1\alpha_2$ and $\alpha_1/\alpha_2$.
\item[\bf{P2.}] Let $\alpha$ be doubly monic. Then for all
    $t\in\QQ\setminus\{-1,0,1\}$ one has $0<|\alpha(t)|<\infty$.
\end{itemize}

Property P2 is in fact true for any algebraic number $t$ which is either not
integer or not a unit in the corresponding number field.

An easy implication of Property~P1 and Theorem~\ref{th_hankel} is the
following fact:

\begin{lemma}\label{lem1}
Assume that for some $n\in\ZZ_{\ge 0}$ the partial quotients
$b_{n,1},b_{n,2},\ldots, b_{n,m-1}$ of $t^n g_u(t)$ are linear and the
corresponding parameters $\beta_{n,1}, \beta_{n,2},\ldots, \beta_{n,m-1}$ are
all doubly monic. Then $H_{g_u}(n+1,m-1)$ is doubly monic if and only if
$\beta_{n,m}$ is doubly monic.
\end{lemma}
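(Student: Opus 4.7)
The plan is to apply formula~\eqref{hankdet} of Theorem~\ref{th_hankel} to the shifted series $t^n g_u(t)$. A brief book-keeping check shows that
$$H_{g_u}(n+1,m-1) = H_{t^n g_u}(1,m-1),$$
since multiplication by $t^n$ merely shifts the coefficient sequence (the coefficient of $t^{-j}$ in $t^n g_u(t)$ is $a_{n+j}$). With the hypothesis that $b_{n,1},\dots,b_{n,m-1}$ are all linear, the first $m-1$ convergents of $t^n g_u(t)$ have degrees $1,2,\dots,m-1$, and formula~\eqref{hankdet} then yields
$$\det H_{g_u}(n+1,m-1) = (-1)^{m(m-1)/2}\,\beta_{n,1}^{m}\,\beta_{n,2}^{m-1}\cdots \beta_{n,m-1}^{2}\,\beta_{n,m}.$$

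The conclusion is then immediate from Property~P1. By hypothesis each of $\beta_{n,1},\dots,\beta_{n,m-1}$ is doubly monic, and since Property~P1 is closed under products and integer powers, the prefix $\beta_{n,1}^{m}\beta_{n,2}^{m-1}\cdots \beta_{n,m-1}^{2}$ is doubly monic; the sign $(-1)^{m(m-1)/2}\in\{\pm 1\}$ is irrelevant to the property. Hence $\det H_{g_u}(n+1,m-1)$ is doubly monic if and only if $\beta_{n,m}$ is.

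The one subtlety I expect to require care in the write-up is the edge case in which $b_{n,m}$ itself fails to be linear. In that situation formula~\eqref{hankdet} does not literally apply at position $m$, but Theorem~\ref{th8} forces $\det H_{g_u}(n+1,m-1)=0$ (and hence not doubly monic), while simultaneously the recurrence defining $\beta_{n,m}$ (cf.\ the second part of Theorem~\ref{th6} in the doubling case) evaluates to zero, so $\beta_{n,m}$ is also not doubly monic. The biconditional therefore holds vacuously in this degenerate branch, completing the argument.
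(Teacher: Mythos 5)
Your proof is correct and follows exactly the route the paper intends: the paper states Lemma~\ref{lem1} as ``an easy implication of Property~P1 and Theorem~\ref{th_hankel}'' without writing out details, and your argument --- identifying $H_{g_u}(n+1,m-1)$ with $H_{t^ng_u}(1,m-1)$, applying~\eqref{hankdet} to get $\pm\beta_{n,1}^{m}\cdots\beta_{n,m-1}^{2}\beta_{n,m}$, and invoking closure of doubly monic functions under products and quotients --- is precisely that implication. Your handling of the degenerate branch (non-linear $b_{n,m}$ forcing both sides of the equivalence to fail) is a sensible extra precaution consistent with how the paper treats vanishing $\beta$'s in Theorem~\ref{th6}.
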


\section{Proof of Theorem~\ref{th5}}

In view of Proposition~\ref{prop1}, for $d=2$ and linear $P$, $g_P$ belongs
to $\elc$ if and only if all Hankel determinants $H_{g_P}(n,l)$ are
non-singular. Then with help of Proposition~\ref{prop2} for $P(t) = t-1$ we
straightforwardly get $a_3=-1, a_4=1, a_5=-1$ and hence
$$
\det(H_{g_{-1}}(3,1)) = \left|\begin{array}{cc}
-1&1\\
1&-1
\end{array}
 \right| = 0,
 $$
and thus $g_{-1} \not \in \elc$.

The remaining problem is to show that for any rational $u\neq 0,\pm1$ and for
all $n\ge 0, l\ge 0$ one has $\det(H(n,l))\neq 0$. Note that each coefficient
$a_n$ of $g_{u}$ is a polynomial (in fact, monomial) in $u$ and hence
$\det(H(n,l))$ are also polynomials. The main idea of the proof is to show
that these polynomials are all doubly monic. That would immediately imply
that the only possible rational roots of such polynomials are $u=0$ and
$u=\pm 1$.

We start by showing that $\det H(n,l)$ are monic polynomials. Moreover, we
prove that
\begin{equation}\label{eq_degh}
\deg(\det H(n,l)) = \sigma(l) + \sigma(n+l-1) - \sigma(n-2)\quad\mbox{and}
\end{equation}
\begin{equation}\label{eq_degth}
\deg(\det \tilde{H}(n,l)) = 2(l+1) + \sigma(l) + \sigma(n+l-1) - \sigma(n-2).
\end{equation}
Here, by convention, we say that $\sigma(n)=0$ for all $n\le 0$.

 We use double induction. We start with the
induction on $n$ and within each inductional step we do the induction on $l$.

{\bf The base of induction.} For $n=1$, it is proven in~\cite[Lemma
8]{badziahin_2018} that $\det H(1,l)$ is monic. Indeed, it is shown there
that all the parameters $\beta_{0,l}$ for $l\in \ZZ_{\ge 0}$ are doubly monic
and hence Lemma~\ref{lem1} implies that so are $\det H(1,l)$.
Next,~\eqref{eq_degh} is an immediate implication of Proposition~\ref{prop4}.
Finally, with help of~\eqref{eq_hnl} we get that $\tilde H(1,l)$ is doubly
monic and
\begin{equation}\label{eq3}
\deg (\det\tilde H(1,l)) = \deg (\det H(1,2l+1)) - \deg (\det H(1,l)) =
2\sum_{i=l+1}^{2l+1} \tau_2(i).
\end{equation}
There is a bijection between two sets $\{l+1,l+2,\ldots, 2l+1\}$ and
$\{0,1,\ldots, l\}$ given by $\phi\;:\; m \mapsto \frac12(m\cdot |m|_2 -1)$.
Moreover, one can check that $\tau_2(m) = \tau_2(\phi(m)) +1$ and therefore
$$
\deg (\det\tilde H(1,l)) = 2\left(l+1 + \sum_{i=0}^l \tau_2(i) \right) = 2(l+1) + 2\sigma(l)
$$
and~\eqref{eq_degth} is verified for $n=1$.

For $l=0$ we obviously have $\det H(n,0) = a_n = u^{\tau_2(n-1)}$ which is
doubly monic and satisfies~\eqref{eq_degh}. Similarly,~\eqref{eq_degth} is
verified for $\tilde{H}(n,0) = a_{n+1} - u^2a_n$. A straightforward
computation also gives $\det H(n,1) = a_na_{n+2} - a_{n+1}^2$.

Let $|n|_2 <1$ . Then $a_n = a_{n+1} / u^{1+\nu_2(n)}$ and $a_{n+2} =
ua_{n+1}$. Hence
$$
\det H(n,1) = a^2_{n+1} (u^{-\nu_2(n)} - 1) = u^{2\tau_2(n)}(u^{-\nu_2(n)} - 1)
$$
is monic. Then~\eqref{eq_degh} is verified by the equation $2\tau_2(n) -
\nu_2(n) = \tau_2(1) + \tau_2(n-1) + \tau_2(n)$.

Let $|n+1|_2 <1$. Then $a_{n+2} = u^{1+\nu_2(n+1)} a_{n+1}$ and $a_n =
a_{n+1}/u$. Therefore
$$
\det H_{g_u}(n,1) = a^2_{n+1} (u^{\nu_2(n+1)} - 1) = u^{2\tau_2(n)}(u^{\nu_2(n+1)} - 1)
$$
which is also monic and~\eqref{eq_degh} is directly verified.

For $\tilde{H}(n,1)$, we have
$$
\det \tilde{H}(n,1) = (a_{n+1}-u^2a_n)(a_{n+3} - u^2 a_{n+2}) - (a_{n+2}-u^2 a_{n+1})^2.
$$
Since for all $m\in\ZZ_{\ge 0}$, $\nu(a_{m+1})\le \nu(a_m)+1$, we have that
the dominating term in the determinant above is $u^4(a_na_{n+2} - a_{n+1}^2)
= u^4 \deg H(n,1)$. This implies that $\tilde{H} (n,1)$ is monic and
satisfies~\eqref{eq_degth}.

{\bf Inductional step.} Assume that for all $l\in \ZZ_{\ge 0}$ the
determinants of $H(1,l) , \ldots, H(2n-1,l)$ are monic. We will prove that
$\det H(2n,l)$ and $\det H(2n+1,l)$ are also monic for all $l\in \ZZ_{\ge 0}$
and their degrees satisfy~\eqref{eq_degh}.

Consider~\eqref{eq_hnl} for $H(2n-1, 2l+1)$ where $l$ is arbitrary.
$$
\det H(2n-1, 2l+1) = \pm \det H(n,l) \cdot \det \tilde{H} (n, l).
$$
By assumption, both $\det H(2n-1, 2l+1)$ and $\det H(n,l)$ are monic and thus
so is $\det \tilde{H} (n,l)$. Moreover,~\eqref{eq_degh} implies
\begin{equation}\label{eq_degth2}
\deg(\det \tilde{H}(n,l)) = \sum_{i=l+1}^{2l+1} \tau_2(i) + \sum_{i=n+l}^{2n+2l-1} \tau_2(i) - \sum_{i=n-1}^{2n-3} \tau_2(i).
\end{equation}
By the same arguments as in the computation of~\eqref{eq3}, we get that this
expression equals
$$
(l+1) + (n+l) - (n-1) + \sum_{i=0}^l\tau_2(i) +\sum_{i=0}^{n+l-1}\tau_2(i) - \sum_{i=0}^{n-2} \tau_2(i)
$$$$
=2(l+1) + \sigma(l) + \sigma(n+l-1) - \sigma(n-2).
$$
This verifies~\eqref{eq_degth} for $\tilde{H}(n,l)$ where $n$ is fixed and $l$ is arbitrary.

According to the base of induction, $\det H(2n,1), \det H(2n,2), \det
H(2n+1,1)$ and $\det H(2n+1,2)$ are monic. Now assume that the determinants
of $H(i,j)$ for all $(i,j)\in \{2n,2n+1\}\times\{1,\ldots, l\}$ are monic and
show that so are $\det H(2n,l+1)$ and $\det H(2n+1,l+1)$.

Suppose that {\bf $l$ is even}. We apply~\eqref{eq_hnl} to get
$$
\det H(2n, l+1) = \pm \det H(n+1,l/2) \cdot \det \tilde{H} (n, l/2).
$$
Both $\det H(n+1,l/2)$ and $\det \tilde{H} (n, l/2)$ are monic, therefore
$\det H(2n, l+1)$ is monic. We also have
$$
\deg (\det H(2n,l+1)) = \sigma(l/2) + \sigma (n+l/2) - \sigma(n-1) + \sum_{i=l/2+1}^{l+1}
 \tau_2(i) + \sum_{i=n+l/2}^{2n+l-1}\tau_2(i) - \sum_{i=n-1}^{2n-3} \tau_2(i)
 $$
 $$
= \sigma(l+1) + \sigma(2n+l-1) +\tau_2(n+l/2) - \sigma (2n-3) - \tau_2(n-1).
$$
Now,~\eqref{eq_degh} follows from $\tau_2(n+l/2) = \tau_2(2n+l)$ and $\tau_2(2n-2) = \tau_2(n-1)$.

Consider $H(2n+1,l+1)$. For $l=2$, we have
$$
\det H(2n+1,3) = \pm \det H(n+1,1)\cdot \tilde{H} (n+1,1).
$$
From the base of induction we get that $H(2n+1,3)$ is monic and its degree is
$$
\sigma(1) + \sigma(n+1) - \sigma(n-1) + \sum_{i=2}^3 \tau_2(i) + \sum_{i=n+2}^{2n+3} \tau_2(i) - \sum_{i=n}^{2n-1} \tau_2(i)
$$
and~\eqref{eq_degh} is straightforwardly verified.

For the case $l\ge 4$ we use Theorem~\ref{th6}. It implies that
$\beta_{2n,l+2} = u^2 + \alpha_{n,l/2+1} - \beta_{2n,l+1}$. Then we apply
Theorem~\ref{th_hankel} to $t^{2n}g_u(t)$ and $t^ng_u(t)$. For $l\ge 4$
equations~\eqref{hankalp} and~\eqref{hankbet} provide the estimates for the
valuations of $\alpha_{n,l/2+1}$ and $\beta_{2n,l+1}$ as rational functions
of $u$:
$$
\nu(\beta_{2n,l+1}) = \nu(\det H(2n+1,l)) + \nu(\det H(2n+1,l-2)) - 2\nu(\det H(2n+1,l-1)),
$$
$$
 = \tau_2(l) - \tau_2(l-1) + \tau_2(2n+l)- \tau_2(2n+l-1)\le 0
$$
and
$$
\begin{array}{rl}
\nu(\alpha_{n,l/2+1}) \le &\!\!\!\!\max\{ \nu(\det H(n+1, l/2-1)) + \nu(\det H(n+2, l/2)) - \nu(\det H(n+1, l/2)),\\
&\!\!\!\!\nu(\det H(n+1, l/2)) + \nu(\det H(n+2,l/2-2))-\nu(\det H(n+1,l/2-1))\}\\
-&\!\!\!\!\nu(\det H(n+2,l/2-1))\\
=&\!\!\!\! \max\{ \tau_2(n+l/2+1) - \tau_2(n+l/2), \tau_2(l/2) - \tau_2(l/2-1)\} \le 1.
\end{array}
$$
These two estimates imply that in the expression $u^2 +
\alpha_{n,l/2+1} - \beta_{2n,l+1}$ the term $u^2$ is dominating,
therefore $\beta_{2n,l+2}$ is monic and thus, by the second
statement of Theorem~\ref{th6}, $\det H(2n+1,l+1)$ is also monic.
For the degree of the determinant, $\nu(\beta_{2n,l+2})=2$ and from
Theorem~\ref{th_hankel} we have
$$
\deg(\det H(2n+1,l+1)) = 2 \deg(\det H(2n+1,l)) + \nu(\beta_{2n,l+2}) - \deg(\det H(2n+1,l-1))
$$$$
 = \sigma(2n-1) + \sigma(l) + \tau_2(l) + \sigma(2n+l) + \tau_2(2n+l) + 2.
$$
Since $l$ is even, one has $\tau_2(l) + 1 = \tau_2(l+1), \tau_2(2n+l) + 1 = \tau_2(2n+l+1)$ and then~\eqref{eq_degh} clearly follows.

Suppose that {\bf $l$ is odd}. We apply~\eqref{eq_hnl} to the matrix
$H(2n+1,l)$ and get
$$
\det H(2n+1,l) = \pm \det H(n+1,(l-1)/2)\cdot \det \tilde{H}(n+1, (l-1)/2).
$$
By inductional assumption, both $\det H(2n+1,l)$ and $\det H(n+1,(l-1)/2)$
are monic and hence so is $\det \tilde{H}(n+1, (l-1)/2)$. Moreover,
$$
\deg(\det\tilde{H}(n+1, (l-1)/2)) = \sum_{i=(l+1)/2}^l\tau_2(i)+ \sum_{i=n+(l+1)/2}^{2n+l} \tau_2(i) - \sum_{i=n}^{2n-1} \tau_2(i)
$$$$
= l+1 + \sigma((l-1)/2) + \sigma(n+(l-1)/2) - \sigma(n-1)
$$
and~\eqref{eq_degth} follows.

We apply~\eqref{eq_hnl2} to get
$$
\det H(2n, l+1) = \pm u \det H(n,(l+1)/2) \cdot \det \tilde{H} (n+1, (l-1)/2).
$$
As we have already shown, $\det H(n,(l+1)/2)$ and $\det \tilde{H} (n+1,
(l-1)/2)$ are monic, therefore $\det H(2n, l+1)$ is also monic. Moreover, we
have
$$
\begin{array}{rl}
\deg (\det H(2n,l+1)) =& 1 + \sigma((l+1)/2) + \sigma(n+(l-1)/2) - \sigma
(n-2) \\[1ex]
+&\displaystyle \sum_{i=(l+1)/2}^{l}\tau_2(i)
 \sum_{i=n+(l+1)/2}^{2n+l}\tau_2(i) - \sum_{i=n}^{2n-1} \tau_2(i)
\end{array}
 $$
 $$
=1 + \sigma(l) + \tau_2((l+1)/2) + \sigma(2n+l) - \sigma(2n-1) + \tau_2(n-1)
$$
Now,~\eqref{eq_degh} follows from $\tau_2((l+1)/2) = \tau_2(l+1)$ and
$\tau_2(2n-1) = 1+ \tau_2(n-1)$.

The last case of $\det H(2n+1, l+1)$ is done analogously. By~\eqref{eq_hnl},
we have
$$
\det H(2n+1, l+1) = \pm \det H(n+1,(l+1)/2) \cdot \det \tilde{H} (n+1, (l-1)/2).
$$
By inductional assumption, both factors on the right hand side are
monic, therefore so is $\det H(2n+1, l+1)$. The degree of this
polynomial is
$$
\begin{array}{rl}
\deg(\det H(2n+1, l+1)) = &\sigma((l+1)/2)+ \sigma(n+(l+1)/2) - \sigma(n-1)\\
 + &l+1 + \sigma((l-1)/2)+\sigma(n+(l-1)/2) - \sigma(n-1)\\
 = &\sigma(l+1) + \sigma(2n+l+1) - \sigma(2n-1).
\end{array}
$$

This finishes the induction.

We have shown that all the polynomials $\det H(n,l)$ are monic. We will now
show that they are in fact doubly monic. Given $n,l\in \ZZ_{\ge 0}$, choose
$d$ large enough such that $2^d\ge n+2l$. Note that for any $1\le m\le 2^d$
one has $a_n = u^d a_{2^d+1-n}^{-1}$, i.e. the sequence $a_{2^d}, a_{2^d-1},
\ldots, a_1$ coincides with the sequence of Laurent series coefficients
$a^*_1, a^*_2, \ldots, a^*_{2^d}$ for the function $u^d g_{1/u}(t)$. Hence
$$
|\det H_{g_u}(n,l)| = |u^{d(l+1)} \cdot \det H_{g_{1/u}}(2^d+1-n,l)|.
$$
Therefore, the smallest non-zero coefficient of $H_{g_u}(n,l)$ coincides with
the leading non-zero coefficient of $H_{g_{1/u}}(2^d+1-n,l)$ as a polynomial
of $u^{-1}$. Since the latter polynomial is monic, the polynomial
$H_{g_u}(n,l)$ is doubly monic. This finishes the proof of Theorem~\ref{th5}.

\endproof

\section{Finite fields $\FF$}

Consider the generalised Thue-Morse function $g_u(t)$ over a finite field
$\FF$. We should have $u\neq 0$ because otherwise $g_u=1$ is a rational
function. In the proof of Theorem~\ref{th5} we computed the Hankel
determinants $H(n,l)$ of $g_u$ for $l=2$. Obviously the same formulae remain
true for any fields. In particular, for even values of $n$ one has
$$
H(n,1) = u^{2\tau_2(n)} (u^{-\nu_2(n)} - 1).
$$
Choose $n$ so that $-\nu_2(n) = \ord(u)$. Then we have that $H(n,1)$ vanishes
and by Proposition~\ref{prop1} we get that $g_u\not\in\elc$. In other words,
for any finite field $\FF$ and for any $u\in\FF$ the Laurent series $g_u$
satisfies $t$-adic Littlewood conjecture. This finishes the proof of
Theorem~\ref{th4}.


\begin{thebibliography}{99}

\bibitem{ad_ne_lu_2019} Adiceam F., Nesharim E. and Lunnon F., ``On the
    $t$-adic Littlewood conjecture'', preprint.

\bibitem{al_pe_we_we_1998} Allouche J.-P., Peyri\`ere J., Wen Z. X. and Wen
    Z. Y., ``Hankel determinants of the Thue-Morse sequence'', Ann. Inst.
    Fourier, \textbf{48}(1) (1998), 1--27.

\bibitem{badziahin_2018} Badziahin D., ``Continued fractions of certain
    Mahler functions'', Acta Arith., \textbf{188}(1) (2019), 53--81.

\bibitem{bugeaud_2014} Bugeaud, Y. ``Around the Littlewood conjecture in Diophantine
approximation.'', Publications math'ematiques de Besançon (2014),
5-18.

\bibitem{bu_ha_we_ya_2016} Bugeaud Y., Han G. N., Wen Z. Y. and Yao J. Y.,
    ``Hankel determinants, Pad\'e approximations, and irrationality
    exponents'', IMRN \textbf{2016}(5) (2016), 1467--1496.

\bibitem{bug_mat_2008} Bugeaud Y., de Mathan B., ``On a mixed Littlewood
    conjecture in fieds of power series'', AIP Conf. Proc., \textbf{976}, 19
    (2008).

\bibitem{coons_2013} Coons M., ``On the rational approximation of the sum of
    the reciprocals of the Fermat numbers'', Ramanujan J., \textbf{30}(1)
    (2013), 39--65.

\bibitem{han_2015a} Han, G. N., ``Hankel determinant calculus for the
    Thue-Morse and related sequences'', J. Numb. Theor. \textbf{147} (2015),
    374 -- 395.

\bibitem{han_2015} Han G. N., ``Hankel continued fraction and its
    applications'', Adv, in Math. \textbf{303} (2016), 295--321.

\bibitem{mat_teu_2004} de Mathan B. and Teuli\'e O., ``Probl\`emes
    diophantiens simultan\'es'', Monatsh. Math., \textbf{143}(3) (2007),
    229--245.

\bibitem{poorten_1998} Van der Poorten A. J., ``Formal power series and
    their continued fraction expansion'', Algorithmic Number Theory,  Lecture notes in Computer Science, 1423,
Springer, Berlin (1998), 358 -- 371.


\bibitem{dav_schmidt_1969} Davenport H. and Schmidt W., ``Approximation to
    real numbers by algebraic integers'', Acta Arith., 15 (1969):
    393--416.
\end{thebibliography}
\end{document}